\begin{document}

\title[L-G expansions of exponential form]{Liouville-Green expansions of exponential form, with an application to modified Bessel functions}

\author[T. M. Dunster]{\textbf{T. M. Dunster}
\\
Department of Mathematics and Statistics, San Diego State University, San Diego, CA 92182-7720, USA.}
\MSdates[‘Received date’]{‘Accepted date’}
\label{firstpage}
\maketitle

\begin{abstract}
\justify
Linear second order differential equations of the form $d^{2}w/dz^{2}-\left 
\{ {u^{2}f\left( u,z\right) +g\left( z\right) }\right\} w=0$ are studied, where $\left\vert u\right\vert \rightarrow \infty $ and $z$ lies in a complex bounded or unbounded domain $\mathbf{D}$. If $f\left( u,z\right) $ and $g\left( z\right) $ are meromorphic in $\mathbf{D}$, and $f\left(u,z\right) $ has no zeros, the classical Liouville-Green/WKBJ approximation provides asymptotic expansions involving the exponential function. The coefficients in these expansions either multiply the exponential, or in an alternative form appear in the exponent. The latter case has applications to the simplification of turning point expansions as well as certain quantum mechanic problems, and new computable error bounds are derived. It is shown how these bounds can be sharpened to provide realistic error estimates, and this is illustrated by an application to modified Bessel functions of complex argument and large positive order. Explicit computable error bounds are also derived for asymptotic expansions for particular solutions of the nonhomogeneous equations of the form $d^{2}w/dz^{2}-\left\{ {u^{2}f\left(z\right) +g\left( z\right) }\right\} w=p\left( z\right)$.

\end{abstract}

\section{Introduction and main results}

\label{sec1} In this paper we study the classical problem of obtaining asymptotic expansions for solutions of differential equations of the form%
\begin{equation}
d^{2}w/dz^{2}=\left\{ {u^{2}f\left( u,z\right) +g\left( z\right) }\right\} w.
\label{eq1}
\end{equation}%
Here $u$ is a large parameter, real or complex, and $z$ lies in a complex domain $\mathbf{D}$ which may be unbounded. We assume that $f\left(u,z\right) $ and $g\left( z\right) $ are meromorphic in $\mathbf{D}$, and $f\left( u,z\right) $ has no zeros in this domain. We shall first assume that case ${f\left( u,z\right) =f\left( z\right) }$ is independent of $u$, and in section \ref{sec3} we consider the more general case where $f$ depends on $u$.

For the present case we make the standard Liouville transformation (see \cite[chap. 10]{Olver:1997}) 
\begin{equation}
d\xi /dz=f^{1/2}\left( z\right) ,  \label{eq2}
\end{equation}%
which upon integration gives the relation 
\begin{equation}
\xi =\int {f^{1/2}\left( z\right) dz.}  \label{eq3}
\end{equation}%
With the associated change of dependent variable%
\begin{equation}
W=f^{1/4}\left( z\right) w  \label{eq4}
\end{equation}%
the differential equation (\ref{eq1}) is transformed to 
\begin{equation}
d^{2}W/d\xi ^{2}=\left\{ {u^{2}+\psi \left( \xi \right) }\right\} W,
\label{eq5}
\end{equation}%
where 
\begin{equation}
\psi \left( \xi \right) =\frac{4f\left( z\right) {f}^{\prime \prime }\left(
z\right) -5{f}^{\prime 2}\left( z\right) }{16f^{3}\left( z\right) }+\frac{g\left( z\right) }{f\left( z\right) }.  \label{eq6}
\end{equation}%
This function is analytic in a $\xi$-domain $\Delta $ (say), which corresponds to some subset of the $z$-domain $\mathbf{D}$.

Classical Liouville-Green (WKBJ) asymptotic expansions for solutions of the new differential equation (\ref{eq5}) are of the form (see, for example, \cite[chap. 10]{Olver:1997})
\begin{equation}
W\sim \exp \left\{ \pm u\xi \right\} \sum\limits_{s=0}^{\infty }\left( \pm
1\right) ^{s}{\frac{A_{s}\left( \xi \right) }{u^{s}},}  \label{eq6a}
\end{equation}%
where $A_{0}\left( \xi \right) =1$ and%
\begin{equation}
A_{s+1}(\xi )=-\tfrac{1}{2}A_{s}^{\prime }(\xi )+\tfrac{1}{2}\int \psi (\xi
)A_{s}(\xi)d\xi \ \left( s=0,1,2,\cdots \right) .  \label{eq6b}
\end{equation}%
We remark that these involve repeated integration, which can lead to difficulties in their computation.

In this paper we study asymptotic expansions of solutions in the alternative form (see \cite[chap. 10, ex. 2.1]{Olver:1997})%
\begin{equation}
W\sim \exp \left\{ \pm u\xi +\sum\limits_{s=0}^{\infty }\left( \pm 1\right)
^{s}{\frac{E_{s}\left( \xi \right) }{u^{s}}}\right\} .  \label{eq6c}
\end{equation}%
These expansions are central in a new method developed in \cite{DGS:2017} on the computation of
asymptotic expansions for turning point problems, and also are important in certain quantum mechanic problems \cite{Dunster:1998}. 

Here the coefficients are found by substitution of (\ref{eq6c}) into (\ref{eq5}) and equating like powers of $u$. By this we find that
\begin{equation}
E_{s}\left( \xi \right) =\int {F_{s}\left( \xi \right) d\xi }\ \left( {s=1,2,3,\cdots }\right) ,  \label{eq10}
\end{equation}%
where 
\begin{equation}
F_{1}\left( \xi \right) ={\tfrac{1}{2}}\psi \left( \xi \right) ,\
F_{2}\left( \xi \right) =-{\tfrac{1}{4}}{\psi }^{\prime }\left( \xi \right) ,
\label{eq11}
\end{equation}%
and 
\begin{equation}
F_{s+1}\left( \xi \right) =-{\dfrac{1}{2}}\frac{dF_{s}\left( \xi \right) }{d\xi }-{\dfrac{1}{2}}\sum\limits_{j=1}^{s-1}{F_{j}\left( \xi \right)
F_{s-j}\left( \xi \right) }\ \left( {s=2,3,4\cdots }\right) .  \label{eq12}
\end{equation}

We remark that an alternative method of generating the same formal series solution is given in \cite{Moriguchi:1959}, using repeated applications of the Liouville transformation (\ref{eq3}) and (\ref{eq4}) to the differential equation (\ref{eq1}).

Comparing the coefficients given by (\ref{eq10}) to those of (\ref{eq6b}) we see an advantage that nested integrations are avoided. Furthermore, from \cite{DGS:2017} it was shown that the even coefficients $E_{2j}\left( \xi \right) 
$ ($j=1,2,3,\cdots $) can be evaluated without resorting to integration. Specifically, we can use the formal relation (which comes from Abel's
theorem)%
\begin{equation}
\left\{ u+\sum_{j=0}^{\infty }\frac{{F_{2j+1}(\xi )}}{{u^{2j+1}}}\right\}
\exp \left\{ 2\sum_{j=1}^{\infty }\frac{{E_{2j}(\xi )}}{{u^{2j}}}\right\}
\sim C,  \label{wronsk}
\end{equation}%
where $C$ is an arbitrary constant. We then asymptotically expand the LHS of this relation in inverse powers of $u$, and equate the coefficient of each term to a constant. As a result, the even coefficients $E_{2j}(\xi )$ ($j=1,2,3,\cdots $) are explicitly given in terms of $F_{2k+1}(\xi )$ ($k=0,1,2,\cdots ,j-1$) (which in turn are given by (\ref{eq11}) and (\ref{eq12})). For example (to within an arbitrary additive constant), $E_{2}=-\tfrac{1}{2}{F_{1}}$, $E_{4}=-\tfrac{1}{2}{F_{3}-}E_{2}{F_{1}-}E_{2}^{2}={\tfrac{1}{4}F_{1}^{2}}-\tfrac{1}{2}{F_{3}}$, etc.

The odd coefficients do require one integration in their evaluation, and usually it is simpler to work in terms of $z$. In doing so let us denote $\hat{E}_{s}\left( z\right) =E_{s}\left( \xi \left( z\right) \right) $ and $\hat{{F}}_{s}\left( z\right) =F_{s}\left( \xi \left( z\right) \right) $, and it then follows from (\ref{eq2}) that%
\begin{equation}
\hat{E}_{s}\left( z\right) =\int \hat{{F}}_{s}\left( z\right) f^{1/2}\left(
z\right) dz\ \left( {s=1,2,3,\cdots }\right) ,  \label{eq12a}
\end{equation}%
where 
\begin{equation}
\hat{{F}}_{s+1}\left( z\right) =-{\dfrac{1}{2f^{1/2}\left( z\right) }}\frac{d\hat{{F}}_{s}\left( z\right) }{dz}-{\dfrac{1}{2}}\sum\limits_{j=1}^{s-1}{\hat{{F}}_{j}\left( z\right) \hat{{F}}_{s-j}\left( z\right) }\ \left( {s=2,3,4\cdots }\right) .  \label{eq13}
\end{equation}

In \cite{Dunster:1998} error bounds were derived for the expansions (\ref{eq6c}), but the ones given in that reference are harder to compute since they contain integrals involving the $E_{s}\left( \xi \right) $ coefficients. Here we shall provide simpler (as well as sharper) error bounds, and these will be used in a subsequent paper to provide readily computable error bounds for the expansions of turning point problems derived in \cite{DGS:2017}. Existing bounds for turning point expansions, although important theoretically, are quite complicated and very hard to compute (see \cite{Olver:1964} and \cite[chap. 11]{Olver:1997}).

We also plan to employ the L-G expansions of the form studied here, along with the new error bounds, to obtain simplified expansions and error bounds for more complicated situations, such as two coalescing turning points \cite{Olver:1975}, a coalescing turning point and simple pole \cite{Dunster:1994}, and a coalescing turning point and double pole \cite{BoydDunster:1986}.

To begin, we truncate the series in the formal expansions (\ref{eq6c}) after $n-1$ terms, and from these define solutions $W_{n,j}\left( {u,\xi }\right) $ ($j=1,2$) of (\ref{eq5}) in the form 
\begin{equation}
W_{n,1}\left( {u,\xi }\right) =\exp \left\{ {S_{n}\left( {u,\xi }\right) }\right\} \left\{ {e^{u\xi }+\varepsilon _{n,1}\left( {u,\xi }\right) } \right\} ,  \label{eq7}
\end{equation}%
and 
\begin{equation}
W_{n,2}\left( {u,\xi }\right) =\exp \left\{ {S_{n}\left( {-u,\xi }\right) } \right\} \left\{ {e^{-u\xi }+\varepsilon _{n,2}\left( {u,\xi }\right) } \right\} ,  \label{eq8}
\end{equation}%
where 
\begin{equation}
S_{n}\left( {u,\xi }\right) =\sum\limits_{s=1}^{n-1}{\frac{E_{s}\left( \xi
\right) }{u^{s}}.}  \label{eq9}
\end{equation}%

The focus of our attention are the error terms, and under certain conditions it can be shown that they satisfy $\varepsilon _{n,1}\left( {u,\xi }\right) =e^{u\xi }\mathcal{O}\left( {u^{-n}}\right) $ and $\varepsilon _{n,2}\left( {u,\xi }\right) =e^{-u\xi }\mathcal{O}\left( {u^{-n}}\right) $ as $u\rightarrow \infty $. Specifically, we shall
prove the following.

\begin{theorem}
For $j=1,2$ let the domain $\Xi _{j}\left( {u,\alpha _{j}}\right) $ comprise the $\xi $ point set for which there is a \textit{progressive} path $\mathcal{L}_{j}$ linking $\xi $ with $\alpha _{j}$ in $\Delta $, that is, one having the properties (i) $\mathcal{L}_{j}$ consists of a finite chain of $R_{2}$ arcs (as defined in \cite[chap. 5, sec. 3.3]{Olver:1997}), and (ii) as $t$ passes along $\mathcal{L}_{j}$ from $\alpha _{j}$ to $\xi $, the real part of $\left( {-1}\right) ^{j}ut$ is nonincreasing. Then for $\xi \in
\Xi _{j}\left( {u,\alpha _{j}}\right) $

\begin{multline}
\left\vert {\varepsilon _{n,1}\left( {u,\xi }\right) }\right\vert ,\
\left\vert \dfrac{1}{u}{\dfrac{\partial \varepsilon _{n,1}\left( {u,\xi }\right) }{\partial \xi }}\right\vert \leq \left\vert \dfrac{e^{u\xi }}{u^{n}}\right\vert \int_{\alpha _{1}}^{\xi }{\left\vert {\chi _{n}\left( {u,t}\right) dt}\right\vert } \\
\times \exp \left\{ {\dfrac{4}{\left\vert u\right\vert }\int_{\alpha
_{1}}^{\xi }{\left\vert {T_{n}\left( {u,t}\right) dt}\right\vert }+\dfrac{1}{\left\vert u\right\vert ^{n}}\int_{\alpha _{1}}^{\xi }{\left\vert {\chi
_{n}\left( {u,t}\right) dt}\right\vert }}\right\} ,  \label{eq23}
\end{multline}%
and

\begin{multline}
\left\vert {\varepsilon _{n,2}\left( {u,\xi }\right) }\right\vert ,\
\left\vert \dfrac{1}{u}{\dfrac{\partial \varepsilon _{n,2}\left( {u,\xi }\right) }{\partial \xi }}\right\vert \leq \left\vert \dfrac{e^{-u\xi }}{u^{n}}\right\vert \int_{\alpha _{2}}^{\xi }{\left\vert {\chi _{n}\left( -{u,t}\right) dt}\right\vert } \\
\times \exp \left\{ {\dfrac{4}{\left\vert u\right\vert }\int_{\alpha
_{2}}^{\xi }{\left\vert {T_{n}\left( -{u,t}\right) dt}\right\vert }+\dfrac{1}{\left\vert u\right\vert ^{n}}\int_{\alpha _{2}}^{\xi }{\left\vert {\chi
_{n}\left( -{u,t}\right) dt}\right\vert }}\right\} .  \label{eq26}
\end{multline}%
Here $T_{n}\left( {u,\xi }\right) $ and $\chi _{n}\left( {u,\xi }\right) $
are given by 
\begin{equation}
T_{n}\left( {u,\xi }\right) =u\frac{\partial S_{n}\left( {u,\xi }\right) }{\partial \xi }=\sum\limits_{s=0}^{n-2}{\frac{F_{s+1}\left( \xi \right) }{u^{s}},}  \label{eq16}
\end{equation}%
and

\begin{equation}
\chi _{n}\left( {u,\xi }\right) =2F_{n}\left( \xi \right)
-\sum\limits_{s=1}^{n-1}{\frac{G_{n,s}\left( \xi \right) }{u^{s}},}
\label{eq18}
\end{equation}%
where 
\begin{equation}
G_{n,s}\left( \xi \right) =\sum\limits_{k=s}^{n-1}{F_{k}\left( \xi \right)
F_{s+n-k-1}\left( \xi \right).}  \label{eq19}
\end{equation}
\label{thm1}
\end{theorem}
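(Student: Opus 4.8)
The strategy is to turn the problem into a Volterra integral equation and run successive approximations in the style of \cite[chap. 6]{Olver:1997}; the one nonstandard ingredient is a kernel estimate precise enough to produce the exponent displayed in (\ref{eq23}). It suffices to treat $j=1$: under $u\mapsto -u$ the progressive‑path condition for $j=2$ becomes that for $j=1$, $\alpha_2$ plays the role of $\alpha_1$, $S_n(-u,\xi)$ and $e^{-u\xi}$ replace $S_n(u,\xi)$ and $e^{u\xi}$, and (\ref{eq26}) reduces to (\ref{eq23}). Set $\mu_1(\xi):=\exp\{u\xi+S_n(u,\xi)\}$ and substitute $W=\exp\{S_n(u,\xi)\}\Phi$ in (\ref{eq5}). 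Since $\partial S_n/\partial\xi=T_n/u$ by (\ref{eq10}) and (\ref{eq16}), the transformed equation is $\Phi''+(2T_n/u)\Phi'-(u^2+2T_n)\Phi=(\chi_n/u^{n-1})\Phi$, once one verifies the algebraic identity
\[
\psi(\xi)-\frac1u\frac{dT_n}{d\xi}-\frac{T_n^2}{u^2}-2T_n=\frac{\chi_n(u,\xi)}{u^{n-1}}.
\]
Collecting powers of $1/u$: the coefficients of $u^0,\dots,u^{-(n-2)}$ cancel by the recurrences (\ref{eq11})--(\ref{eq12}), the coefficient of $u^{-(n-1)}$ is $2F_n(\xi)$, and for $n\le k\le 2n-2$ the coefficient of $u^{-k}$ is $-\sum F_a F_b$ (sum over $a+b=k$, $1\le a,b\le n-1$), which is exactly $-G_{n,k-n+1}$ from (\ref{eq19}); this is precisely $\chi_n$ as in (\ref{eq18}).

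Because $e^{u\xi}$ annihilates the left‑hand side, writing $\Phi=e^{u\xi}v$ (so $\varepsilon_{n,1}=e^{u\xi}(v-1)$) gives $v''+2(u+T_n/u)v'=(\chi_n/u^{n-1})v$; multiplying by the integrating factor $\mu_1^2$ and integrating twice from $\alpha_1$ with $v(\alpha_1)=1$, $v'(\alpha_1)=0$ produces
\[
v(\xi)=1+\frac1{u^{n-1}}\int_{\alpha_1}^{\xi}\mathcal{K}(\xi,t)\,\chi_n(u,t)\,v(t)\,dt,\qquad \mathcal{K}(\xi,t):=\mu_1^2(t)\int_t^\xi\mu_1^{-2}(s)\,ds .
\]
This is the equation to solve; $W_{n,1}:=\mu_1 v$ then has the form (\ref{eq7}) and solves (\ref{eq5}). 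Moreover $u^{-1}\partial_\xi\varepsilon_{n,1}=e^{u\xi}(v+v'/u-1)$ with $v+v'/u-1=u^{1-n}\int_{\alpha_1}^\xi\{\mathcal{K}+u^{-1}\partial_\xi\mathcal{K}\}\chi_n v$, since $\mathcal{K}(\xi,\xi)=0$ and $\partial_\xi\mathcal{K}(\xi,t)=\mu_1^2(t)\mu_1^{-2}(\xi)$; thus $\varepsilon_{n,1}$ and $u^{-1}\partial_\xi\varepsilon_{n,1}$ are controlled by the \emph{same} kernel combinations, which is the point.

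The heart of the argument is the estimate of $\mathcal{K}$ and of $\mathcal K+u^{-1}\partial_\xi\mathcal K$. Writing $e^{-2us}=-(2u)^{-1}(d/ds)e^{-2us}$ inside $\mu_1^{-2}(s)$ and integrating by parts,
\[
\mathcal{K}(\xi,t)=\frac{1-\mu_1^2(t)\mu_1^{-2}(\xi)}{2u}-\frac1{u^2}\int_t^\xi T_n(u,s)\,\mu_1^2(t)\mu_1^{-2}(s)\,ds ,
\]
and $\mathcal K+u^{-1}\partial_\xi\mathcal K$ is the same expression with the numerator $1-\mu_1^2(t)\mu_1^{-2}(\xi)$ replaced by $1+\mu_1^2(t)\mu_1^{-2}(\xi)$. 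On a progressive path, for $t$ and $s$ between $\alpha_1$ and $\xi$ with $t$ preceding $s$ one has $\mathrm{Re}\{u(s-t)\}\ge0$, so $|\mu_1^2(t)\mu_1^{-2}(s)|\le e^{(2/|u|)\int_t^s|T_n|}$, and similarly with $s$ replaced by $\xi$. Putting $Q(\xi):=(4/|u|)\int_{\alpha_1}^\xi|T_n(u,t)\,dt|$ and $r:=\tfrac12(Q(\xi)-Q(t))\ge0$, both $|\mathcal{K}(\xi,t)|$ and $|\mathcal K(\xi,t)+u^{-1}\partial_\xi\mathcal K(\xi,t)|$ are at most $\bigl(1+(1+r)e^{r}\bigr)/(2|u|)\le e^{2r}/|u|=e^{Q(\xi)-Q(t)}/|u|$, using the elementary inequality $1+(1+r)e^{r}\le2e^{2r}$ for $r\ge0$.

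With $\Psi(\xi):=\int_{\alpha_1}^\xi|\chi_n(u,t)\,dt|$, the successive approximations $v_0\equiv1$, $v_{m+1}=1+u^{1-n}\int_{\alpha_1}^\xi\mathcal K\chi_n v_m$ satisfy $|v_m-v_{m-1}|\le (m!)^{-1}e^{Q(\xi)}(\Psi(\xi)/|u|^n)^m$ by induction, using $\int_{\alpha_1}^\xi|\chi_n(t)|\,\Psi(t)^m\,|dt|=\Psi(\xi)^{m+1}/(m+1)$; the series converges uniformly on compact subsets of the relevant $\xi$‑set, giving a solution $v$ with $|v(\xi)|\le e^{Q(\xi)+\Psi(\xi)/|u|^n}$ (using $1+e^{Q}(e^{x}-1)\le e^{Q+x}$ for $Q\ge0$). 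Feeding this bound back into the integral representations of $v-1$ and of $v+v'/u-1$, together with the kernel estimate and $\int_{\alpha_1}^\xi|\chi_n(t)|e^{\Psi(t)/|u|^n}|dt|=|u|^n(e^{\Psi(\xi)/|u|^n}-1)$, yields in both cases the bound $(\Psi(\xi)/|u|^n)\,e^{Q(\xi)+\Psi(\xi)/|u|^n}$; multiplying by $|e^{u\xi}|$ gives (\ref{eq23}). The step I expect to be most delicate is the kernel estimate: the integration by parts must be organised so as to expose the factor $1/u$ while keeping the bound on the combination $\mathcal K+u^{-1}\partial_\xi\mathcal K$ — not merely on $\mathcal K$ — in the sharp form $e^{Q(\xi)-Q(t)}/|u|$, since this is exactly what makes the derivative bound inherit the same constant and preserves the coefficient $4$ in the exponent. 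The $\chi_n$ identity is routine bookkeeping, but the other place where care is needed.
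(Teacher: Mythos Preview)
Your argument is correct and self-contained, but it is organised differently from the paper's. The paper keeps $\varepsilon=\varepsilon_{n,1}$ as the unknown and writes it as a solution of $\varepsilon''-u^{2}\varepsilon=h$, where $h$ contains the terms $2T_n\varepsilon$, $-(2T_n/u)\varepsilon'$, $u^{1-n}\chi_n\varepsilon$ and the forcing $u^{1-n}e^{u\xi}\chi_n$; the free equation has the elementary kernel $\tfrac12(e^{u(\xi-t)}-e^{u(t-\xi)})$, and the paper then simply invokes \cite[chap.~6, theorem 10.1]{Olver:1997} with the identifications $J=e^{ut}$, $\phi=\chi_n/u^{n}$, $\psi_0=2T_n/u+\chi_n/u^{n}$, $\psi_1=-2T_n/u^{2}$, which immediately produces (\ref{eq23}). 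In that framework the coefficient $4$ arises because Olver's bound carries $|\psi_0|+|\psi_1|\,P_1/P_0$, and here $P_1/P_0=|u|$ so both $\psi_0$ and $\psi_1$ contribute $2|T_n|/|u|$.

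You instead absorb the $T_n$-terms into the left-hand side by passing to $v=e^{-u\xi}\Phi$, obtaining a Volterra equation for $v$ with no derivative of the unknown on the right but with the nontrivial kernel $\mathcal K(\xi,t)=\mu_1^{2}(t)\int_t^{\xi}\mu_1^{-2}(s)\,ds$, and you then run the successive approximations by hand. The integration-by-parts step that extracts a factor $1/u$ from $\mathcal K$ (and simultaneously from $\mathcal K+u^{-1}\partial_\xi\mathcal K$) is exactly what Olver's theorem is doing internally for the paper's simpler kernel, and your bound $|\mathcal K|\le e^{2r}/|u|$ with $r=(2/|u|)\int_t^\xi|T_n|$ is where the same coefficient $4$ reappears. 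The trade-off: the paper's route is shorter because the hard work is delegated to a cited theorem, while yours is transparent about the mechanism and does not rely on an external black box; in particular your treatment makes it visible that $\varepsilon_{n,1}$ and $u^{-1}\partial_\xi\varepsilon_{n,1}$ obey the \emph{same} bound because they are governed by the two kernel combinations $\mathcal K$ and $\mathcal K+u^{-1}\partial_\xi\mathcal K$, which your integration by parts bounds identically. The verification of the $\chi_n$ identity is the same in both approaches.
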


\textbf{Remark 1}. The domains can be unbounded provided the integrals in (\ref{eq23}) and (\ref{eq26}) converge at infinity. From \cite[chap. 10]{Olver:1997} a sufficient condition for this to be true is if there exist
the following convergent expansions in a neighbourhood of $z=\infty $%
\begin{equation}
f\left( z\right) =z^{m}\sum\limits_{s=0}^{\infty }{f}_{s}{z}^{-s},\ g\left(
z\right) =z^{p}\sum\limits_{s=0}^{\infty }{g}_{s}{z}^{-s},  \label{eq19a}
\end{equation}%
where the leading terms are nonzero, and $m>-2$ with $p<\frac{1}{2}m-1$, or $m=p=-2$ and $g_{0}=-\frac{1}{4}$. The same is true if $f\left( z\right) $ and $\ g\left( z\right) $ have a pole at $z=z_{0}$ (say), with Laurent
expansions in a deleted neighbourhood of $z=z_{0}$ of the form 
\begin{equation}
f\left( z\right) =\left( z-z_{0}\right) ^{-m}\sum\limits_{s=0}^{\infty }{f}_{s}\left( z-z_{0}\right) ^{s},\ g\left( z\right) =\left( z-z_{0}\right)^{-p}\sum\limits_{s=0}^{\infty }{g}_{s}\left( z-z_{0}\right) ^{s},
\label{eq19b}
\end{equation}%
where the leading terms are nonzero, and $m>2$ with $0\leq p<\frac{1}{2}m+1$, or $m=p=2$ and $g_{0}=-\frac{1}{4}$.

\textbf{Remark 2. }From (\ref{eq18}) we can utilise the simplification 
\begin{equation}
\int_{\mathcal{L}_{j}}{\left\vert {\chi _{n}\left( \pm {u,t}\right) dt} \right\vert }\leq 2\int_{\mathcal{L}_{j}}{\left\vert {F_{n}\left( t\right) dt}\right\vert }+\sum\limits_{s=1}^{n-1}{\frac{1}{\left\vert u\right\vert ^{s}}\int_{\mathcal{L}_{j}}{\left\vert {G_{n,s}\left( t\right) dt}\right\vert ,}}
\label{eq24}
\end{equation}%
and similarly from (\ref{eq16}) 
\begin{equation}
\int_{\mathcal{L}_{j}}{\left\vert {T_{n}\left( \pm {u,t}\right) dt}\right\vert }\leq \sum\limits_{s=0}^{n-2}{\frac{1}{\left\vert u\right\vert
^{s}}\int_{\mathcal{L}_{j}}{\left\vert {F_{s+1}\left( t\right) dt}\right\vert.}}  \label{eq25}
\end{equation}%
We also note that all functions appearing in these integrals can be explicitly evaluated without difficulty via (\ref{eq11}) and (\ref{eq12}), and hence these error bounds are readily computable via quadrature.

As we shall see, the integrals in the bounds can be quite large relative to the actual error when $\xi $ is not close to $\alpha _{j}.$ In order to obtain sharper bounds in this case we make the following simple modification, under the assumption that $\mathrm{Re}\left( u\alpha_{j}\right) =\left( -1\right)
^{j}\infty $ ($j=1,2$). Firstly, we note that the following solution is independent of $n$%
\begin{equation}
W_{1}\left( {u,\xi }\right) =\exp \left\{ {S_{n}\left( {u,\xi }\right)
-S_{n}\left( {u,}\alpha _{1}\right) }\right\} \left\{ {e^{u\xi }+\varepsilon
_{n,1}\left( {u,\xi }\right) }\right\} ,  \label{eq25a}
\end{equation}%
and indeed this is the unique solution having the property $W_{1}\left( {u,\xi }\right) ={e^{u\xi }}\left\{ 1+o\left( 1\right) \right\} $ as ${\xi
\rightarrow }\alpha _{1}$ ($\mathrm{Re}\left( u\xi \right) {\rightarrow
-\infty }$). Thus on replacing $n$ by $n+r$ for any positive integer $r$ we
deduce that the RHS of (\ref{eq25a}) is unchanged. As a result we can assert that 
\begin{multline}
\exp \left\{ {S_{n}\left( {u,\xi }\right) -S_{n}\left( {u,}\alpha
_{1}\right) }\right\} \left\{ {e^{u\xi }+\varepsilon _{n,1}\left( {u,\xi }\right) }\right\} \\
=\exp \left\{ {S_{n+r}\left( {u,\xi }\right) -S_{n+r}\left( {u,}\alpha
_{1}\right) }\right\} \left\{ {e^{u\xi }+\varepsilon _{n+r,1}\left( {u,\xi }\right) }\right\} ,  \label{eq25b}
\end{multline}%
and hence%
\begin{multline}
{\varepsilon _{n,1}\left( {u,\xi }\right) =e^{u\xi }}\left[ \exp \left\{ {S_{n+r}\left( {u,\xi }\right) -S_{n+r}\left( {u,}\alpha _{1}\right) -{S_{n}\left( {u,\xi }\right) }+S_{n}\left( {u,}\alpha _{1}\right) }\right\} -1\right] \\
+{\varepsilon _{n+r,1}\left( {u,\xi }\right) }\exp \left\{ {S_{n+r}\left( {u,\xi }\right) -S_{n+r}\left( {u,}\alpha _{1}\right) }-{S_{n}\left( {u,\xi }\right) +S_{n}\left( {u,}\alpha _{1}\right) }\right\} .  \label{eq25d}
\end{multline}

Likewise, if 
\begin{equation}
W_{2}\left( {u,\xi }\right) =\exp \left\{ {S_{n}\left( -{u,\xi }\right)
-S_{n}\left( -{u,}\alpha _{2}\right) }\right\} \left\{ {e^{-u\xi
}+\varepsilon _{n,2}\left( {u,\xi }\right) }\right\} ,  \label{eq25e}
\end{equation}%
then a similar expression can be derived for ${\varepsilon _{n,2}\left( {u,\xi }\right) }$ in terms of ${\varepsilon _{n+r,2}\left( {u,\xi }\right) }$. From theorem \ref{thm1} and (\ref{eq9}) we arrive at the following,
assuming that all integrals therein converge.

\begin{theorem}
Under the conditions of theorem \ref{thm1}, and with $\mathrm{Re}\left(u\alpha _{j}\right) =\left( -1\right) ^{j}\infty $ ($j=1,2$), the differential equation (\ref{eq5}) has solutions (\ref{eq25a}) and (\ref{eq25e}) which are independent of $n$, and whose error terms satisfy 
\begin{multline}
\left\vert {e^{-u\xi }\varepsilon _{n,1}\left( {u,\xi }\right) }\right\vert
\leq \left\vert \exp \left\{ \sum\limits_{s=n}^{n+r-1}{\dfrac{E_{s}\left(
\xi \right) -E_{s}\left( \alpha _{1}\right) }{u^{s}}}\right\} -1\right\vert
\\
+\dfrac{1}{\left\vert u\right\vert ^{n+r}}\int_{\alpha _{1}}^{\xi }{\left\vert {\chi _{n+r}\left( {u,t}\right) dt}\right\vert }\exp \left\{ {\dfrac{4}{\left\vert u\right\vert }\int_{\alpha _{1}}^{\xi }{\left\vert {T_{n+r}\left( {u,t}\right) dt}\right\vert }}\right. \\
\left._{\overset{}{}}^{{}}{+\dfrac{1}{\left\vert u\right\vert ^{n+r}}\int_{\alpha _{1}}^{\xi }{\left\vert {\chi _{n+r}\left( {u,t}\right) dt}\right\vert +}}\mathrm{Re}\sum\limits_{s=n}^{n+r-1}{\dfrac{E_{s}\left( \xi
\right) -E_{s}\left( \alpha _{1}\right) }{u^{s}}}\right\},  \label{eq26a}
\end{multline}
and 
\begin{multline}
\left\vert {e^{u\xi }\varepsilon _{n,2}\left( {u,\xi }\right) }\right\vert
\leq \left\vert \exp \left\{ \sum\limits_{s=n}^{n+r-1}\left( -1\right) ^{s}{\dfrac{E_{s}\left( \xi \right) -E_{s}\left( \alpha _{2}\right) }{u^{s}}}\right\} -1\right\vert \\
+\dfrac{1}{\left\vert u\right\vert ^{n+r}}\int_{\alpha _{2}}^{\xi }{\left\vert {\chi _{n+r}\left( -{u,t}\right) dt}\right\vert }\exp \left\{ {\dfrac{4}{\left\vert u\right\vert }\int_{\alpha _{2}}^{\xi }{\left\vert {T_{n+r}\left( -{u,t}\right) dt}\right\vert }}\right. \\
\left._{\overset{}{}}^{{}}{+\dfrac{1}{\left\vert u\right\vert ^{n+r}}\int_{\alpha _{2}}^{\xi }{\left\vert {\chi _{n+r}\left( -{u,t}\right) dt}\right\vert +}}\mathrm{Re}\sum\limits_{s=n}^{n+r-1}\left( -1\right) ^{s}{\dfrac{E_{s}\left( \xi \right) -E_{s}\left( \alpha _{2}\right) }{u^{s}}}\right\},  \label{eq26b}
\end{multline}%
for any positive integer $r$. \label{thm2}
\end{theorem}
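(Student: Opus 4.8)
The plan is to obtain both inequalities as essentially immediate consequences of the identity (\ref{eq25d}), its $\varepsilon_{n,2}$-analogue, and the bounds of Theorem \ref{thm1} applied with $n$ replaced by $n+r$. Using (\ref{eq9}) the exponent occurring in (\ref{eq25d}) can be rewritten as
\[
S_{n+r}(u,\xi)-S_{n+r}(u,\alpha_{1})-S_{n}(u,\xi)+S_{n}(u,\alpha_{1})
=\sum_{s=n}^{n+r-1}\frac{E_{s}(\xi)-E_{s}(\alpha_{1})}{u^{s}},
\]
which I shall call $\Sigma_{1}$, so that dividing (\ref{eq25d}) by $e^{u\xi}$ gives
\[
e^{-u\xi}\varepsilon_{n,1}(u,\xi)=\bigl(e^{\Sigma_{1}}-1\bigr)
+e^{-u\xi}\varepsilon_{n+r,1}(u,\xi)\,e^{\Sigma_{1}}.
\]
Taking moduli, and using the triangle inequality together with $|e^{\Sigma_{1}}|=e^{\mathrm{Re}\,\Sigma_{1}}$, reduces the task to estimating $|e^{-u\xi}\varepsilon_{n+r,1}(u,\xi)|$; for this I would invoke (\ref{eq23}) with $n$ replaced by $n+r$, whose hypotheses are precisely those assumed here. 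The factor $|e^{u\xi}/u^{n+r}|$ there combines with $e^{-u\xi}$ to leave $|u|^{-(n+r)}$, and the exponential factor on the right of (\ref{eq23}) is multiplied by $e^{\mathrm{Re}\,\Sigma_{1}}$, which reproduces exactly the exponential in (\ref{eq26a}); putting the two contributions together yields (\ref{eq26a}).

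The second inequality (\ref{eq26b}) is obtained in the same manner, now starting from the $\varepsilon_{n,2}$-analogue of (\ref{eq25d}) attached to the solution (\ref{eq25e}) and (\ref{eq8}), and using (\ref{eq26}) of Theorem \ref{thm1} with $n\to n+r$ in place of (\ref{eq23}). The only difference is that (\ref{eq9}) now supplies $S_{n}(-u,\xi)=\sum_{s=1}^{n-1}(-1)^{s}E_{s}(\xi)/u^{s}$, so the relevant sum becomes $\Sigma_{2}=\sum_{s=n}^{n+r-1}(-1)^{s}\{E_{s}(\xi)-E_{s}(\alpha_{2})\}/u^{s}$, which carries the alternating signs displayed in (\ref{eq26b}).

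The one point that requires a little care --- and the only place the hypothesis $\mathrm{Re}(u\alpha_{j})=(-1)^{j}\infty$ is actually used --- is the claim, set up in the discussion around (\ref{eq25a})--(\ref{eq25b}), that the solution $W_{1}(u,\xi)$ is genuinely independent of $n$, so that (\ref{eq25d}) is valid for every positive integer $r$. This holds because, as $\xi\to\alpha_{1}$ along a progressive path, $\mathrm{Re}(u\xi)\to-\infty$; Theorem \ref{thm1}, with its integrals convergent by hypothesis, then forces $e^{-u\xi}\varepsilon_{n,1}(u,\xi)\to 0$, while $S_{n}(u,\xi)-S_{n}(u,\alpha_{1})\to 0$, so that $W_{1}(u,\xi)=e^{u\xi}\{1+o(1)\}$ as $\xi\to\alpha_{1}$, a property that determines $W_{1}$ uniquely among solutions of (\ref{eq5}); hence the right-hand side of (\ref{eq25a}) cannot change when $n$ is increased. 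Granting this, the remainder is the elementary rearrangement above, and I anticipate no genuine obstacle --- the value of the theorem lies in the sharper estimate it delivers rather than in the difficulty of its derivation.
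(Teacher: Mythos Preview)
Your proposal is correct and follows essentially the same approach as the paper: the argument there is precisely to use (\ref{eq25d}) (and its $j=2$ analogue), rewrite the exponent via (\ref{eq9}) as the sum $\sum_{s=n}^{n+r-1}\{E_s(\xi)-E_s(\alpha_j)\}/u^s$, and then apply the bounds (\ref{eq23}), (\ref{eq26}) of Theorem~\ref{thm1} with $n$ replaced by $n+r$. Your elaboration of the uniqueness argument justifying the $n$-independence of $W_1$ is a bit more detailed than what the paper states, but the underlying reasoning is the same.
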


\textbf{Remark}: In computing the first terms on the RHS of both bounds for
large $\left\vert u\right\vert $ it may be numerically advantageous to use
the bound%
\begin{equation}
\left\vert e^{z}-1\right\vert \leq \sum\limits_{k=1}^{\infty }{\dfrac{\left\vert z\right\vert ^{k}}{k!}.}  \label{eq26c}
\end{equation}

The plan of this paper is as follows. In section \ref{sec2} we prove theorem \ref{thm1}, and extend this to the case where the error terms appear in the exponent of the exponential approximations. In section \ref{sec3} generalisations are given of theorem \ref{thm1} for certain cases where $f=f\left( u,z\right) $ is dependent on $u$. A nonhomogeneous form of (\ref{eq1}) is studied in section \ref{sec4}, and an explicit error bound is given for an asymptotic expansion of a uniquely-defined particular solution. Finally, in section \ref{sec5}, theorem \ref{thm2} is applied to the modified Bessel equation to yield exponential-form uniform asymptotic expansions for its solutions, complete with error bounds which are sharp and easily computable.

\section{Proof and extensions of theorem 1.1}

\label{sec2}

Firstly we consider $\varepsilon =\varepsilon _{n,1}\left( {u,\xi }\right) $. Then on denoting $T=T_{n}\left( {u,\xi }\right) _{\,}$ we have from (\ref{eq5}), (\ref{eq10})-(\ref{eq12}), (\ref{eq7}) and (\ref{eq16}) 
\begin{equation}
d^{2}\varepsilon /d\xi ^{2}-u^{2}\varepsilon =h,  \label{eq14}
\end{equation}%
where 
\begin{equation}
h=h_{n}\left( {u,\xi }\right) =2T\varepsilon +\frac{\chi \varepsilon }{u^{n-1}}-\frac{2T}{u}\frac{d\varepsilon }{d\xi }+\frac{e^{u\xi }\chi }{u^{n-1}},  \label{eq15}
\end{equation}%
in which 
\begin{equation}
\chi =\chi_{n}\left( {u,\xi }\right) =u^{n-1}\left\{ {\psi -2T-\frac{1}{u}\frac{dT}{d\xi }-\frac{T^{2}}{u^{2}}}\right\} .  \label{eq17}
\end{equation}%
Now using the Cauchy product formula, along with (\ref{eq12}) and (\ref{eq16}), we arrive at (\ref{eq18}), and in particular $\chi_{n}\left( {u,\xi }\right)$ is bounded in $\Delta$ as $u\rightarrow \infty $.

Next, from (\ref{eq14}) we apply variation of parameters to obtain the
integral equation 
\begin{equation}
\varepsilon _{n,1}\left( {u,\xi }\right) =\frac{1}{u}\int_{\alpha _{1}}^{\xi
}\mathsf{K}{\left( {\xi ,t}\right) h}_{n}{\left( {u,t}\right) dt,}
\label{eq20}
\end{equation}%
where 
\begin{equation}
\mathsf{K}\left( {\xi ,t}\right) ={\tfrac{1}{2}}\left\{ {e^{u\left( {\xi -t}\right) }-e^{u\left( {t-\xi }\right) }}\right\},  \label{eq21}
\end{equation}%
and the path of integration is taken to be $\mathcal{L}_{1}$. Therefore,
under the conditions of this progressive path we deduce that 
\begin{equation}
\left\vert \mathsf{K}\left( {\xi ,t}\right) \right\vert \leq P_{0}\left( {\xi }\right) Q\left( t\right) ,\ \left\vert \partial \mathsf{K}\left( {\xi ,t}\right) /\partial {\xi }\right\vert \leq P_{1}\left( {\xi }\right) Q\left(
t\right) ,  \label{eq57a}
\end{equation}%
where%
\begin{equation}
P_{0}\left( {\xi }\right) =\left\vert {e^{u\xi }}\right\vert ,\ P_{1}\left( {\xi }\right) =\left\vert {u e^{u\xi }}\right\vert ,\ Q\left( {t}\right)=\left\vert {e^{-ut}}\right\vert .  \label{eq21a}
\end{equation}

We now apply \cite[chap. 6, theorem 10.1]{Olver:1997} to (\ref{eq20}). In
Olver's notation we have $P_{0}$,$\ P_{1}$,$\ $and $Q$ as above, and on
referring to (\ref{eq15}) 
\begin{equation}
J=e^{ut},\ \phi =\frac{\chi }{u^{n}},\ \psi _{0}=\frac{2T}{u}+\frac{\chi }{u^{n}},\ \psi _{1}=-\frac{2T}{u^{2}}.  \label{eq22}
\end{equation}%
This then establishes (\ref{eq23}). The bound (\ref{eq26}) is proved
similarly.

In studying zeros it is sometimes convenient to have the error term in the
exponent. From (\ref{eq7}) 
\begin{equation}
W_{n,1}\left( {u,\xi }\right) =\exp \left\{ {u\xi +S_{n}\left( {u,\xi }\right) +\delta _{n,1}\left( {u,\xi }\right) }\right\},  
\label{eq27}
\end{equation}%
and%
\begin{equation}
W_{n,2}\left( {u,\xi }\right) =\exp \left\{ -{u\xi +S_{n}\left( -{u,\xi }\right) +\delta _{n,2}\left( {u,\xi }\right) }\right\},
\label{eq27a}
\end{equation}%
where for $j=1,2$ 
\begin{equation}
\delta _{n,j}\left( {u,\xi }\right) =\ln \left[ {1+\exp }\left\{ \left(
-1\right) ^{j+1}u\xi \right\} {\varepsilon _{n,j}\left( {u,\xi }\right) }\right].  \label{eq28}
\end{equation}%
Hence from (\ref{eq23}) and (\ref{eq26}) we immediately infer that 
\begin{equation}
\left\vert {\delta _{n,j}\left( {u,\xi }\right) }\right\vert \leq -\ln
\left\{ {1-\kappa _{n,j}\left( {u,\xi }\right) }\right\} ,  \label{eq29}
\end{equation}%
where 
\begin{multline}
\kappa _{n,j}\left( {u,\xi }\right) =\frac{1}{\left\vert u\right\vert ^{n}}\int_{\alpha _{j}}^{\xi }{\left\vert {\chi _{n}\left( \pm {u,t}\right) dt}\right\vert } \\
\times \exp \left\{ {\frac{4}{\left\vert u\right\vert }\int_{\alpha
_{j}}^{\xi }{\left\vert {T_{n}\left( \pm {u,t}\right) dt}\right\vert }+\frac{1}{\left\vert u\right\vert ^{n}}\int_{\alpha _{j}}^{\xi }{\left\vert {\chi
_{n}\left( \pm {u,t}\right) dt}\right\vert }}\right\} ,  \label{eq30}
\end{multline}%
provided $\left\vert u\right\vert $ is sufficiently large so that $\kappa
_{n,j}\left( {u,\xi }\right) <1$. Here and throughout the plus signs are
taken for $j=1$ and the minus signs for $j=2$. We note that (\ref{eq24}) and
(\ref{eq25}) can again be employed to simplify computation.

Next for the derivatives of solutions we obtain from (\ref{eq16}), (\ref{eq27}) and (\ref{eq28}) 
\begin{multline}
\frac{\partial W_{n,1}\left( {u,\xi }\right) }{\partial \xi }=\left[ {u+\frac{T_{n}\left( {u,\xi }\right) }{u}+\frac{\partial \delta _{n,1}\left( {u,\xi }\right) }{\partial \xi }}\right] \\
\times \exp \left\{ {u\xi +S_{n}\left( {u,\xi }\right) +\delta _{n,1}\left( {u,\xi }\right) }\right\} .  \label{eq31}
\end{multline}%
Consequently 
\begin{multline}
\exp \left\{ {-u\xi -S_{n}\left( {u,\xi }\right) }\right\} \dfrac{\partial
W_{n,1}\left( {u,\xi }\right) }{\partial \xi } \\
=u+\dfrac{T_{n}\left( {u,\xi }\right) }{u}\left\{ {1+e^{-u\xi }\varepsilon
_{n,1}\left( {u,\xi }\right) }\right\} +e^{-u\xi }\dfrac{\partial
\varepsilon _{n,1}\left( {u,\xi }\right) }{\partial \xi }.  \label{eq32}
\end{multline}%
Accordingly 
\begin{equation}
\frac{\partial W_{n,1}\left( {u,\xi }\right) }{\partial \xi }=u\exp \left\{ {u\xi +S_{n}\left( {u,\xi }\right) +\ln \left\{ {1+u^{-2}T_{n}\left( {u,\xi }\right) }\right\} +\eta _{n,1}\left( {u,\xi }\right) }\right\} ,
\label{eq33}
\end{equation}%
where 
\begin{multline}
\exp \left\{ {\eta _{n,1}\left( {u,\xi }\right) }\right\} =1+{e^{-u\xi }}\left[ {1+\frac{T_{n}\left( {u,\xi }\right) }{u^{2}}}\right] ^{-1} \\
\times \left\{ {\frac{\partial \varepsilon _{n,1}\left( {u,\xi }\right) }{u\partial \xi }+\frac{T_{n}\left( {u,\xi }\right) }{u^{2}}\varepsilon
_{n,1}\left( {u,\xi }\right) }\right\} .  \label{eq34}
\end{multline}%
This leads to the desired bound 
\begin{equation}
\left\vert {\eta _{n,1}\left( {u,\xi }\right) }\right\vert \leq -\ln \left[ {1-\frac{\left\{ \left\vert {u}\right\vert {^{2}+\left\vert {T_{n}\left( {u,\xi }\right) }\right\vert }\right\} \kappa _{n,1}\left( {u,\xi }\right) }{\left\vert {u}\right\vert ^{2}-\left\vert {T_{n}\left( {u,\xi }\right) }\right\vert }}\right] ,  \label{eq35}
\end{equation}%
in which $\kappa _{n,1}\left( {u,\xi }\right) $ is given by (\ref{eq30}).

Similarly we find
\begin{multline}
\dfrac{\partial W_{n,2}\left( {u,\xi }\right) }{\partial \xi }=-u\exp
\left\{ -{u\xi +S_{n}\left( -{u,\xi }\right) }\right. \\
\left. {+\ln }\left[ {1+u^{-2}T_{n}\left( -{u,\xi }\right) }\right] {+\eta
_{n,2}\left( {u,\xi }\right) }\right\} ,  \label{eq35a}
\end{multline}%
where%
\begin{equation}
\left\vert {\eta _{n,2}\left( {u,\xi }\right) }\right\vert \leq -\ln \left[ {1-\frac{\left\{ \left\vert {u}\right\vert {^{2}+\left\vert {T_{n}\left( -{u,\xi }\right) }\right\vert }\right\} \kappa _{n,2}\left( {u,\xi }\right) }{\left\vert {u}\right\vert ^{2}-\left\vert {T_{n}\left( -{u,\xi }\right) }\right\vert }}\right] .  \label{eq35b}
\end{equation}

More generally, for any non-vanishing differentiable function $\sigma \left(
\xi \right) $, one can similarly demonstrate that

\begin{multline}
\dfrac{\partial \left\{ {\sigma \left( \xi \right) W_{n,j}\left( {u,\xi }\right) }\right\} }{\partial \xi }=\pm u\sigma \left( \xi \right) \exp
\left\{ \pm {u\xi +\ln \left[ {1\pm \dfrac{{\sigma }^{\prime }\left( \xi
\right) }{u\sigma \left( \xi \right) }+\dfrac{T_{n}\left( \pm {u,\xi }\right) }{u^{2}}}\right] }\right. \\
\biggl._{\overset{}{}}^{{}}+{S_{n}\left( \pm {u,\xi }\right) +\eta
_{n,j}\left( {u,\xi }\right) }\biggr\},  \label{eq36}
\end{multline}%
where 
\begin{equation}
\left\vert {\eta _{n,j}\left( {u,\xi }\right) }\right\vert \leq -\ln \left[ {1-\frac{\left\{ {\left\vert {u^{2}\sigma \left( \xi \right) }\right\vert
+\left\vert \pm {u{\sigma }^{\prime }\left( \xi \right) +\sigma \left( \xi
\right) T_{n}\left( \pm {u,\xi }\right) }\right\vert }\right\} \kappa
_{n,j}\left( {u,\xi }\right) }{\left\vert u^{2}{\sigma \left( \xi \right) }\right\vert -\left\vert \pm {u{\sigma }^{\prime }\left( \xi \right) +\sigma
\left( \xi \right) T_{n}\left( \pm {u,\xi }\right) }\right\vert }}\right] .
\label{eq37}
\end{equation}

\section{More general form of equation}

\label{sec3}

In this section we consider the case where $f=f\left( u,z\right) $ is
dependent on $u$. Firstly, suppose that $f\left( u,z\right) ={f_{0}\left(
z\right) +u}^{-1}{f_{1}\left( z\right) }$, where ${f_{0}\left( z\right) }$
and ${f_{1}\left( z\right) }$ are independent of $u$. Then (\ref{eq1}) takes
the form 
\begin{equation}
d^{2}w/dz^{2}=\left\{ {u^{2}f_{0}\left( z\right) +uf_{1}\left( z\right)
+g\left( z\right) }\right\} w.  \label{eq38}
\end{equation}

In place of (\ref{eq3}) and (\ref{eq4}) the appropriate Liouville transformation now reads as 
\begin{equation}
\xi =\int {f_{0}^{1/2}\left( z\right) dz},\ W=f_{0}^{1/4}\left( z\right) w.
\label{eq39}
\end{equation}%
As a result, we obtain the transformed equation 
\begin{equation}
d^{2}W/d\xi ^{2}=\left\{ {u^{2}+u\phi \left( \xi \right) +\psi \left( \xi
\right) }\right\} W,  \label{eq40}
\end{equation}%
where 
\begin{equation}
\phi \left( \xi \right) =f_{1}\left( z\right) /f_{0}\left( z\right) ,
\label{eq41}
\end{equation}%
and 
\begin{equation}
\psi \left( \xi \right) =\frac{4f_{0}\left( z\right) {f}_{0}^{\prime \prime
}\left( z\right) -5{f}_{0}^{\prime }{}^{2}\left( z\right) }{16f_{0}^{3}\left( z\right) }+\frac{g\left( z\right) }{f_{0}\left( z\right) }.
\label{eq42}
\end{equation}

In place of (\ref{eq10}) - (\ref{eq12}) the coefficients in the expansions
are found to be given by 
\begin{equation}
E_{s}^{\pm }\left( \xi \right) =\int {F_{s}^{\pm }\left( \xi \right) d\xi }\
\left( {s=0,1,2,\cdots }\right) ,  \label{eq46}
\end{equation}%
where 
\begin{equation}
{F_{0}^{\pm }}\left( \xi \right) =\pm {\tfrac{1}{2}}\phi \left( \xi \right) ,
\label{eq47}
\end{equation}%
\begin{equation}
{F_{1}^{\pm }}\left( \xi \right) ={\tfrac{1}{2}}\psi \left( \xi \right) -{\tfrac{1}{8}}\phi ^{2}\left( \xi \right) \mp {\tfrac{1}{4}}{\phi }^{\prime
}\left( \xi \right) ,  \label{eq48}
\end{equation}%
and 
\begin{equation}
{F_{s+1}^{\pm }}\left( \xi \right) =-{\dfrac{1}{2}}\frac{d{F_{s}^{\pm }}\left( \xi \right) }{d\xi }-{\dfrac{1}{2}}\sum\limits_{j=0}^{s}{F_{j}^{\pm
}\left( \xi \right) F_{s-j}^{\pm }\left( \xi \right) }\ \left( {s=1,2,3,\cdots }\right) .  \label{eq49}
\end{equation}%

We then define $\chi ^{\pm }=\chi _{n}^{\pm }\left( {u,\xi }\right) $ by (c.f. (\ref{eq17}))%
\begin{equation}
\chi ^{\pm }=\left( \pm u\right) ^{n-1}\left[ \psi \mp \frac{1}{2}{\phi }^{\prime }-\frac{1}{4}\phi ^{2}-2T^{\pm }-\frac{\phi T^{\pm }}{u}\mp \frac{1}{u}\frac{d T^{\pm }}{d \xi }-\frac{\left\{ T^{\pm }\right\} ^{2}}{u^{2}}\right] .  \label{eq51a}
\end{equation}%
Here $T=T_{n}^{\pm }\left( {u,\xi }\right) $ is defined by%
\begin{equation}
T_{n}^{\pm }\left( {u,\xi }\right) =\sum\limits_{s=0}^{n-2}\left( \pm
1\right) ^{s}{\frac{F_{s+1}^{\pm }\left( \xi \right) }{u^{s}},}
\label{eq51b}
\end{equation}
and use of the Cauchy product formula establishes that $\chi _{n}^{\pm }\left( {u,\xi }\right) $ are both bounded in $\Delta $ as $u \rightarrow \infty $.

With the definition%
\begin{equation}
S_{n}^{\pm }\left( {u,\xi }\right) =\sum\limits_{s=1}^{n-1}\left( \pm
1\right) ^{s}{\frac{E_{s+1}^{\pm }\left( \xi \right) }{u^{s}},}
\label{eq51c}
\end{equation}%
we then arrive at the following.

\begin{theorem}
For $j=1,2$ let the domain $\Xi _{j}\left( {u,\alpha _{j}}\right) $ comprise
the $\xi $ point set for which there is a \textit{path} $\mathcal{L}_{j}$
linking $\xi $ with $\alpha _{j}$ in $\Delta $ having the properties (i) $\mathcal{L}_{j}$ consists of a finite chain of $R_{2}$ arcs, and (ii) as $t$
passes along $\mathcal{L}_{j}$ from $\alpha _{j}$ to $\xi $, the real part
of $\left( {-1}\right) ^{j}ut{-E_{0}^{\pm}\left( t\right) }$ is nonincreasing. Then for $\xi \in \Xi _{j}\left( {u,\alpha _{j}}\right) $ the
differential equation (\ref{eq40}) has solutions of the form 
\begin{equation}
W_{n,1}\left( {u,\xi }\right) =\exp \left\{ S_{n}^{+}\left( {u,\xi }\right)
\right\} \left[ \exp \left\{ u\xi +{E_{0}^{+}\left( \xi \right) }\right\} {+\varepsilon _{n,1}\left( {u,\xi }\right) }\right] ,  \label{eq43}
\end{equation}%
and 
\begin{equation}
W_{n,2}\left( {u,\xi }\right) =\exp \left\{ S_{n}^{-}\left( {u,\xi }\right)
\right\} \left[ \exp \left\{ -u\xi +{E_{0}^{-}\left( \xi \right) }\right\} {+\varepsilon _{n,2}\left( {u,\xi }\right) }\right] ,  \label{eq44}
\end{equation}%
where 
\begin{multline}
\left\vert {\varepsilon _{n,j}\left( {u,\xi }\right) }\right\vert ,\
\left\vert {\dfrac{1}{u+{\frac{1}{2}}\phi \left( \xi \right) }\dfrac{\partial \varepsilon _{n,j}\left( {u,\xi }\right) }{\partial \xi }}\right\vert \leq \kappa _{0}\left( u,\xi \right) \left\vert \dfrac{\exp
\left\{ \pm u\xi {+E_{0}^{\pm }\left( \xi \right) }\right\} }{u^{n}}%
\right\vert \\
\times \int_{\mathcal{L}_{j}}{\left\vert \chi _{n}^{\pm }\left( {u,t}\right) 
{dt}\right\vert }\exp \left\{ {\dfrac{1}{\left\vert u\right\vert }\left( {2+2\kappa _{0}\left( u,\xi \right) +\dfrac{\kappa _{0}\left( u,\xi \right)
\kappa _{2}{\left( \xi \right) }}{\left\vert u\right\vert }}\right) \int_{\mathcal{L}_{j}}{\left\vert {T_{n}^{\pm }}\left( {u,t}\right) {dt}\right\vert }}\right. \\
\left. {+\dfrac{\kappa _{0}\left( u,\xi \right) }{\left\vert u\right\vert }\int_{\mathcal{L}_{j}}{\left\vert {{\phi }^{\prime }\left( t\right) dt}\right\vert }+\dfrac{\kappa _{0}\left( u,\xi \right) }{\left\vert
u\right\vert ^{n}}\int_{\mathcal{L}_{j}}{\left\vert \chi _{n}^{\pm }\left( {u,t}\right) {dt}\right\vert }}\right\} .  \label{eq59}
\end{multline}%
Here 
\begin{equation}
\kappa _{0}\left( u,\xi \right) =\sup_{t\in \mathcal{L}_{j}}\left\{ \frac{1}{\left\vert {1+\phi \left( t\right) /\left( {2u}\right) }\right\vert }\right\} ,\ \kappa _{2}\left( \xi \right) =\sup_{t\in \mathcal{L}_{j}}\left\vert {\phi \left( t\right) }\right\vert .
\end{equation}%
\label{thm3}
\end{theorem}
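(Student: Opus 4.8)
The plan is to mirror the proof of theorem \ref{thm1} given in section \ref{sec2}. The one genuinely new feature of (\ref{eq40}) is the term $u\phi(\xi)$: being of the same order in $u$ as the leading part, it cannot be relegated to the inhomogeneity and so must be absorbed into the comparison equation. I shall treat $j=1$ with the upper signs throughout; the case $j=2$ follows on replacing $u$ by $-u$ and $E_{0}^{+}$ by $E_{0}^{-}=-E_{0}^{+}$, with the minus sign taken in property (ii).

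First I would introduce the near-identity change of independent variable
\[
\zeta=\xi+u^{-1}E_{0}^{+}(\xi),\qquad \frac{d\zeta}{d\xi}=1+\frac{\phi(\xi)}{2u},
\]
so that $u\xi+E_{0}^{+}(\xi)=u\zeta$ and the intended leading exponential $\exp\{u\xi+E_{0}^{+}(\xi)\}$ becomes simply $e^{u\zeta}$; for $\left\vert u\right\vert$ large this map is invertible on $\mathcal{L}_{1}$. Substituting (\ref{eq43}) into (\ref{eq40}), using the recurrences (\ref{eq47})--(\ref{eq49}) together with the definitions (\ref{eq51a}) and (\ref{eq51b}), and expressing everything in terms of $\zeta$, the lower-order terms cancel by virtue of the recurrences and $\varepsilon=\varepsilon_{n,1}(u,\xi)$ is found to satisfy an equation of exactly the form (\ref{eq14}),
\[
\frac{d^{2}\varepsilon}{d\zeta^{2}}-u^{2}\varepsilon=\widehat{h}_{n}(u,\zeta),
\]
whose right-hand side has the structure of (\ref{eq15}): a source term proportional to $\chi_{n}^{+}(u,\xi)e^{u\zeta}/u^{n-1}$, together with terms linear in $\varepsilon$ and $d\varepsilon/d\zeta$ whose coefficients are built from $T_{n}^{+}(u,\xi)$, $\phi$ and $\phi'$, each carrying a power of the Jacobian factor $\{1+\phi/(2u)\}^{-1}$ produced by the change of variable. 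That $\chi_{n}^{+}$ is bounded in $\Delta$ as $u\to\infty$ has already been recorded before the statement of the theorem.

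Next I would apply variation of parameters with the comparison solutions $e^{\pm u\zeta}$, obtaining the Volterra integral equation
\[
\varepsilon_{n,1}(u,\xi)=\frac{1}{u}\int_{\mathcal{L}_{1}}\mathsf{K}(\zeta,\tau)\,\widehat{h}_{n}(u,\tau)\,d\tau,\qquad \mathsf{K}(\zeta,\tau)=\tfrac{1}{2}\bigl\{e^{u(\zeta-\tau)}-e^{u(\tau-\zeta)}\bigr\},
\]
exactly as in (\ref{eq20})--(\ref{eq21}). Rewritten in the $\xi$ variable, $u(\zeta-\tau)$ becomes $\{u\xi+E_{0}^{+}(\xi)\}-\{ut+E_{0}^{+}(t)\}$ and $d\tau=\{1+\phi(t)/(2u)\}\,dt$, and property (ii) is precisely the monotonicity needed to bound $\mathsf{K}$ and $\partial\mathsf{K}/\partial\zeta$ by products $P_{0}(\xi)Q(t)$ and $P_{1}(\xi)Q(t)$, with $P_{0}(\xi)=\left\vert e^{u\xi+E_{0}^{+}(\xi)}\right\vert$, $P_{1}(\xi)=\left\vert ue^{u\xi+E_{0}^{+}(\xi)}\right\vert$ and $Q(t)=\left\vert e^{-ut-E_{0}^{+}(t)}\right\vert$, in analogy with (\ref{eq57a})--(\ref{eq21a}). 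One then invokes \cite[chap. 6, theorem 10.1]{Olver:1997} with $J(t)=e^{ut+E_{0}^{+}(t)}$ and with $\phi,\psi_{0},\psi_{1}$ read off from $\widehat{h}_{n}$ as in (\ref{eq22}). The $\{1+\phi/(2u)\}^{-1}$ factors in the coefficients of $\widehat{h}_{n}$, multiplied by the Jacobian $1+\phi(t)/(2u)$ coming from $d\tau$, collapse to a single factor bounded by $\kappa_{0}(u,\xi)$ both in the source term and in each perturbation coefficient, while $\left\vert 1+\phi/(2u)\right\vert\le 1+\kappa_{2}(\xi)/(2\left\vert u\right\vert)$ accounts for the ratio $P_{1}/P_{0}=\left\vert u+\tfrac{1}{2}\phi\right\vert$. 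Carrying these through yields exactly the bound (\ref{eq59}). Finally, since $\partial\varepsilon/\partial\xi=\{1+\phi(\xi)/(2u)\}\,\partial\varepsilon/\partial\zeta$, the quantity $\{u+\tfrac{1}{2}\phi(\xi)\}^{-1}\partial\varepsilon/\partial\xi$ equals $u^{-1}\partial\varepsilon/\partial\zeta$ and therefore obeys the same estimate; and the existence of the solution (\ref{eq43}) of (\ref{eq40}) follows in the standard manner from the convergence of the successive approximations to the Volterra equation.

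I expect the principal obstacle to lie in the bookkeeping of the first two steps: performing the cancellation of the lower-order terms so as to exhibit $\widehat{h}_{n}$ explicitly, and then matching the auxiliary functions of Olver's theorem so that the various $\phi$-dependent factors assemble into precisely the constants $\kappa_{0}$ and $\kappa_{2}$ and the coefficients $2$, $2\kappa_{0}$ and $\kappa_{0}\kappa_{2}/\left\vert u\right\vert$ occurring in (\ref{eq59}). The appearance of $u$ inside the Wronskian-type quantity $2\{u+\tfrac{1}{2}\phi(t)\}$ of the comparison solutions is what makes this step genuinely different from section \ref{sec2}.
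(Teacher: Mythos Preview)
Your approach is correct but differs from the paper's in how the extra term $u\phi(\xi)$ is absorbed. Rather than changing the independent variable, the paper keeps $\xi$ throughout and uses $\exp\{\pm u\xi+E_{0}^{\pm}(\xi)\}$ directly as comparison solutions: these are shown (via the elementary identity (\ref{eq52})) to satisfy
\[
y''-\frac{\phi'}{2u+\phi}\,y'-\Bigl\{u^{2}+u\phi+\tfrac{1}{4}\phi^{2}\Bigr\}\,y=0,
\]
and the error equation $d^{2}\varepsilon/d\xi^{2}-\{u^{2}+u\phi\}\varepsilon=h$ is recast in this form, the difference being absorbed into a modified inhomogeneity $h^{+}$ (see (\ref{eq54}) and (\ref{eq55})). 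Variation of parameters with these comparison solutions then yields the kernel (\ref{eq58}), which carries the Wronskian factor $\{2+u^{-1}\phi(t)\}^{-1}$ explicitly, and the $P_{0},P_{1},Q,J,\psi_{0},\psi_{1}$ of Olver's theorem are read off in the $\xi$ variable. Your change of variable $\zeta=\xi+u^{-1}E_{0}^{+}(\xi)$ leads to the same integral equation once the Jacobian factors are unwound---indeed your $\tfrac{1}{2}\{e^{u(\zeta-\tau)}-e^{u(\tau-\zeta)}\}$ times $d\tau=\{1+\phi(t)/(2u)\}\,dt$ acting on $\widehat{h}_{n}=\{1+\phi/(2u)\}^{-2}h^{+}$ reproduces the paper's integrand exactly. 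The paper's route avoids the $u$-dependent change of variable (and hence any invertibility caveat) at the price of a comparison equation with a first-derivative term; your route keeps the kernel in the simple exponential form at the price of tracking Jacobian powers in $\widehat{h}_{n}$. Either way the genuine work is the bookkeeping you anticipate, namely matching the coefficients in Olver's theorem so that the $\phi$-dependent factors assemble into $\kappa_{0}$ and $\kappa_{2}$ as in (\ref{eq59}).
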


\begin{proof}
Again we consider $\varepsilon =\varepsilon _{n,1}\left( {u,\xi }\right) $,
with $\varepsilon _{n,2}\left( {u,\xi }\right) $ proved similarly. From (\ref{eq40}), (\ref{eq43}), (\ref{eq46}) - (\ref{eq49}) we see that the error
term in question satisfies the differential equation 
\begin{equation}
d^{2}\varepsilon /d\xi ^{2}-\left\{ {u^{2}+u\phi \ }\right\} \varepsilon =h,  \label{eq50}
\end{equation}%
where this time $h$ is given by 
\begin{equation}
h=2T^{+}\varepsilon +\frac{1}{2}{\phi }^{\prime }\varepsilon +\frac{\phi
T^{+}\varepsilon }{u} +\frac{\chi ^{+}\varepsilon }{u^{n-1}}+\frac{1}{4}\phi
^{2}\varepsilon -\frac{2T^{+}}{u}\frac{d\varepsilon }{d\xi }+\frac{\exp
\left( u\xi +E_{0}^{+}\right) \chi ^{+}}{u^{n-1}}.  \label{eq51}
\end{equation}%

Now it is verifiable by substitution that any linearly independent pair of
twice differentiable functions $y_{1}\left( \xi \right) $ and $y_{2}\left(\xi \right) $ satisfy the linear second order differential equation 
\begin{equation}
\frac{d}{d\xi }\left\{ {\frac{y}{y_{1}}-\frac{y_{2}}{y_{1}\left( {y_{2}/y_{1}}\right) ^{\prime }}\frac{d}{d\xi }\left( {\frac{y}{y_{1}}}\right) }\right\}
=0,  \label{eq52}
\end{equation}%
and so on using this, along with (\ref{eq46}) and (\ref{eq47}), we find that 
$y=\exp \left\{ {\pm u\xi +E_{0}^{\pm }\left( \xi \right) }\right\} $ are
solutions of 
\begin{equation}
\frac{d^{2}y}{d\xi ^{2}}-\frac{{\phi }^{\prime }}{2u+\phi }\frac{dy}{d\xi }-\left\{ {u^{2}+u\phi +\frac{1}{4}\phi ^{2}}\right\} y=0.  \label{eq53}
\end{equation}%
We thus reformulate (\ref{eq50}) into the form 
\begin{equation}
\frac{d^{2}\varepsilon }{d\xi ^{2}}-\frac{{\phi }^{\prime }}{2u+\phi }\frac{d\varepsilon }{d\xi }-\left\{ {u^{2}+u\phi +\frac{1}{4}\phi ^{2}}\right\}
\varepsilon =h^{+},  \label{eq54}
\end{equation}%
where, from (\ref{eq51}), 
\begin{multline}
h^{+}=h_{n,1}^{+}\left( {u,\xi }\right) =2T^{+}\varepsilon +\frac{1}{2}{\phi 
}^{\prime }\varepsilon +\frac{\phi T^{+}\varepsilon }{u}+\frac{\chi^{+}\varepsilon }{u^{n-1}}-\frac{2T^{+}}{u}\frac{d\varepsilon }{d\xi } \\
-\frac{{\phi }^{\prime }}{2u+\phi }\frac{d\varepsilon }{d\xi }+\frac{\exp\left( u\xi +E_{0}^{+}\right) \chi ^{+}}{u^{n-1}}.  \label{eq55}
\end{multline}%

Now using the Wronskian 
\begin{equation}
\mathcal{W}\left[ {\exp \left\{ {u\xi +E_{0}^{+}\left( \xi \right) }\right\}
,\exp \left\{ {-u\xi +E_{0}^{-}\left( \xi \right) }\right\} }\right]
=-\left\{ {2u+\phi \left( \xi \right) }\right\} ^{-1},  \label{eq56}
\end{equation}%
along with variation parameters, we find from (\ref{eq54}) that
\begin{equation}
\varepsilon _{n,1}\left( {u,\xi }\right) =\frac{1}{u}\int_{\alpha _{1}}^{\xi
}{\mathsf{K}\left( {\xi ,t}\right) h_{n,1}^{+}\left( {u,t}\right) dt,}
\label{eq57}
\end{equation}
where 
\begin{equation}
\mathsf{K}\left( {\xi ,t}\right) =\frac{\exp \left\{ {u\xi +E_{0}^{+}\left(
\xi \right) -ut-E_{0}^{+}\left( t\right) }\right\} -\exp \left\{ {ut+E_{0}^{+}\left( t\right) -u\xi -E_{0}^{+}\left( \xi \right) }\right\} }{2+u^{-1}\phi \left( t\right) }.  \label{eq58}
\end{equation}

Next, by taking the path of integration in (\ref{eq57}) as the progressive path $\mathcal{L}_{1}$ we
deduce that (\ref{eq57a}) holds, where this time 
\begin{multline}
P_{0}\left( {\xi }\right) =\left\vert \exp \left\{ {u\xi +E_{0}^{+}\left(
\xi \right) }\right\} \right\vert ,\ Q\left( t\right) =\left\vert \frac{\exp
\left\{ {-ut-E_{0}^{+}\left( t\right) }\right\} }{1+\frac{1}{2}u^{-1}\phi
\left( t\right) }\right\vert , \\
\ P_{1}\left( {\xi }\right) =\left\vert \left( u+\tfrac{1}{2}\phi \left( \xi
\right) \right) \exp \left\{ {u\xi +E_{0}^{+}\left( \xi \right) }\right\}
\right\vert .  \label{eq58a}
\end{multline}%
We then apply \cite[chap. 6, theorem 10.1]{Olver:1997} with%
\begin{multline}
J=\exp \left\{ ut+E_{0}^{+}\left( t\right) \right\} ,\ \phi ^{olv}=\frac{\chi ^{+}}{u^{n}},\ \psi _{0}=\frac{2T^{+}}{u}+\frac{{\phi }^{\prime }}{2u}+\frac{\chi ^{+}}{u^{n}}+\frac{\phi T^{+}}{u^{2}},\  \\
\psi _{1}=-\frac{2T^{+}}{u^{2}}-\frac{{\phi }^{\prime }}{u\left( 2u+\phi
\right) }.  \label{eq58b}
\end{multline}%
Here we have used the notation $\phi ^{olv}$ for Olver's $\phi $ to avoid
conflict with ours.
\end{proof}

For a more general case let us assume that%
\begin{equation}
f\left( u,z\right) \sim \sum\limits_{s=0}^{\infty }\frac{f{_{s}\left(z\right) }}{u^{s}}.
\label{eq601}
\end{equation}%
We again use the transformation (\ref{eq39}), and as a result (\ref{eq1}) becomes 
\begin{equation}
d^{2}W/d\xi ^{2}=\left\{ {u^{2}+u\phi \left( \xi \right) +\psi \left( u,\xi
\right) }\right\} W,  \label{eq601a}
\end{equation}%
where%
\begin{equation}
\psi \left( u,\xi \right) =\sum\limits_{s=0}^{n-1}\frac{\psi {_{s}\left( \xi
\right) }}{u^{s}}{+}\frac{{\Psi }_{n}\left( u,\xi \right) }{u^{n}},
\label{eq602}
\end{equation}%
with $\psi {_{0}\left( \xi \right) }$ again given by (\ref{eq42}), subsequent terms are furnished by 
\begin{equation}
\psi {_{s}\left( \xi \right) =}\frac{f{_{s+2}\left( z\right) }}{f{_{0}\left(z\right) }}\ \left( {s=1,2,3,\cdots }\right) ,  \label{eq603}
\end{equation}%
and ${\Psi }_{n}\left( u,\xi \right) $ is analytic and bounded in $\Delta $.

We find that $F_{0}^{\pm }\left( \xi \right) $ are again given by (\ref{eq47}), with subsequent terms modified by 
\begin{equation}
F_{1}^{\pm }\left( \xi \right) ={\tfrac{1}{2}}\psi _{0}\left( \xi \right) -{\tfrac{1}{8}}\phi ^{2}\left( \xi \right) \mp {\tfrac{1}{4}}{\phi }^{\prime
}\left( \xi \right) ,  \label{eq604}
\end{equation}%
and%
\begin{equation}
F_{s+1}^{\pm }\left( \xi \right) ={\dfrac{1}{2}}\psi _{s}\left( \xi \right) -{\dfrac{1}{2}}\frac{dF_{s}^{\pm }\left( \xi \right) }{d\xi }-{\dfrac{1}{2}}\sum\limits_{j=0}^{s}{F_{j}^{\pm }\left( \xi \right) F_{s-j}^{\pm }\left(
\xi \right) }\ \left( {s=1,2,3,\cdots }\right) .  \label{eq605}
\end{equation}%
As a result solutions of the forms (\ref{eq43}) and (\ref{eq44}) hold, with coefficients modified as above (and with (\ref{eq46}) still applicable). In these expansions, the error bounds are given by (\ref{eq59}) in the same domains, but where $\psi \left( \xi \right) $ in (\ref{eq51a})\ is replaced
by the expansion (\ref{eq602}).

\section{Nonhomogeneous equations}

\label{sec4}

In this section we consider the following nonhomogeneous form of (\ref{eq1}) 
\begin{equation}
d^{2}w/dz^{2}-\left\{ {u^{2}f\left( z\right) +g\left( z\right) }\right\}
w=p\left( z\right) .  \label{eq61}
\end{equation}%
Here $p\left( z\right) $ is analytic in $\mathbf{D}$, and ${f\left( z\right) 
}$ (nonvanishing), ${g\left( z\right) }$, and $p\left( z\right) $ are
independent of $u$ (although the latter restriction can often be relaxed without undue difficulty). The Liouville transformation (\ref{eq3}) and (\ref{eq4}) is again applicable, and this results in the transformed
nonhomogeneous equation 
\begin{equation}
d^{2}W/d\xi ^{2}-\left\{ {u^{2}+\psi \left( \xi \right) }\right\} W=\varpi
\left( \xi \right) ,  \label{eq62}
\end{equation}%
where 
\begin{equation}
\varpi \left( \xi \right) =f^{-3/4}\left( z\right) p\left( z\right) ,
\label{eq63}
\end{equation}%
which is analytic in $\Delta $.

Neglecting $\varpi \left( \xi \right) $ we have the homogeneous equation (\ref{eq5}), with asymptotic solutions given by (\ref{eq7}) and (\ref{eq8}).
A general solution of (\ref{eq62}) is therefore furnished by 
\begin{equation}
W\left( {u,\xi }\right) =A\left( u\right) W_{n,1}\left( {u,\xi }\right)
+B\left( u\right) W_{n,2}\left( {u,\xi }\right) +G\left( {u,\xi }\right) ,
\label{eq66}
\end{equation}%
for arbitrary constants $A\left( u\right) $ and $B\left( u\right)$, and  $G\left( {u,\xi }\right) $ is any particular solution. The focus of our
attention is the asymptotic approximation of this latter solution.

Following \cite[chap. 10, section 10]{Olver:1997} if we assume the expansion 
\begin{equation}
G\left( {u,\xi }\right) \sim \frac{1}{u^{2}}\sum\limits_{s=0}^{\infty }{\frac{G_{s}\left( \xi \right) }{u^{2s}},}  \label{eq66a}
\end{equation}%
we find on substitution into (\ref{eq62}) and equating powers of $u$, that $G_{0}\left( \xi \right) =-\varpi \left( \xi \right) $ and 
\begin{equation}
G_{s+1}\left( \xi \right) ={G}_{s}^{\prime \prime }\left( \xi \right) -\psi
\left( \xi \right) G_{s}\left( \xi \right) \ \left( {s=0,1,2,\cdots }\right)
.  \label{eq68}
\end{equation}

An explicit error bound for the expansion (\ref{eq66a}) is furnished as
follows. In this we assume that $\mathrm{Re}\left( u\alpha _{j}\right)
=\left( -1\right) ^{j}\infty $, although this is not a critical assumption.

\begin{theorem}
Let $\mathcal{L}\left( \xi \right) $ be a path in $\Xi _{1}\left( {u,\alpha
_{1}}\right) \cap \Xi _{2}\left( {u,\alpha _{2}}\right) $ linking $\alpha_{1}$ to $\alpha _{2}$ that contains the point ${\xi }$, having the
properties (i) $\mathcal{L}\left( \xi \right) $ consists of a finite chain
of $R_{2}$ arcs, and (ii) as $t$ passes along $\mathcal{L}\left( \xi \right) 
$ from $\alpha _{1}$ to $\alpha _{2}$, the real part of $ut$ is monotonic.
Further assume $\left\vert u\right\vert $ is sufficiently large so that $\int_{\mathcal{L}\left( \xi \right) }{\left\vert {\psi \left( t\right) dt}\right\vert }<2\left\vert u\right\vert $. Then there exists a unique particular solution of (\ref{eq62}) of the form 
\begin{equation}
G\left( {u,\xi }\right) =\frac{1}{u^{2}}\sum\limits_{s=0}^{n-1}{\frac{G_{s}\left( \xi \right) }{u^{2s}}}+\varepsilon _{n}\left( {u,\xi }\right) ,
\label{eq67}
\end{equation}%
where 
\begin{multline}
\left\vert {\varepsilon _{n}\left( {u,\xi }\right) }\right\vert \leq \dfrac{1}{\left\vert u\right\vert ^{2n+2}}\left\{ \left\vert {G_{n}\left( \xi
\right) }\right\vert +\dfrac{1}{2}\int_{\mathcal{L}\left( \xi \right) }{\left\vert {{G}_{n}^{\prime }\left( t\right) dt}\right\vert }\right\} \\
+\dfrac{L_{n}\left( \xi \right) }{2\left\vert u\right\vert ^{2n+3}}\left\{ {1-\dfrac{1}{2\left\vert u\right\vert }\int_{\mathcal{L}\left( \xi \right) }{\left\vert {\psi \left( t\right) dt}\right\vert }}\right\} ^{-1}\int_{\mathcal{L}\left( \xi \right) }{\left\vert {\psi \left( t\right) dt}\right\vert ,}  \label{eq82}
\end{multline}%
in which%
\begin{equation}
L_{n}\left( \xi \right) ={\sup_{w\in \mathcal{L}\left( \xi \right) }}\left\vert {G_{n}\left( w\right) }\right\vert +\frac{1}{2}\int_{\mathcal{L}\left( \xi \right) }{\left\vert {{G}_{n}^{\prime }\left( t\right) dt}\right\vert .}  \label{eq80}
\end{equation}%
In the bounds the integrals and supremum are assumed to exist.

More generally, for any positive integer $r$ 
\begin{multline}
\left\vert {\varepsilon _{n}\left( {u,\xi }\right) }\right\vert \leq
\left\vert\sum\limits_{s=n}^{n+r-1} \dfrac{{G_{s}\left( \xi \right) }}{u^{2s+2}}\right\vert +\dfrac{1}{\left\vert u\right\vert ^{2n+2r+2}}\left\{
\left\vert {G_{n+r}\left( \xi \right) }\right\vert +\dfrac{1}{2}\int_{\mathcal{L}\left( \xi \right) }{\left\vert {{G}_{n+r}^{\prime }\left(
t\right) dt}\right\vert }\right\} \\
+\dfrac{L_{n+r}\left( \xi \right) }{2\left\vert u\right\vert ^{2n+2r+3}}\left\{ {1-\dfrac{1}{2\left\vert u\right\vert }\int_{\mathcal{L}\left( \xi\right) }{\left\vert {\psi \left( t\right) dt}\right\vert }}\right\}^{-1}\int_{\mathcal{L}\left( \xi \right) }{\left\vert {\psi \left( t\right)
dt}\right\vert .}  \label{eq88}
\end{multline}%
\label{thm4}
\end{theorem}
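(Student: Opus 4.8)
The plan is to follow the variation-of-parameters scheme of \cite[chap. 10, section 10]{Olver:1997} already invoked above, working directly with the remainder and imposing control of it at both endpoints $\alpha_{1}$ and $\alpha_{2}$. First I would set
\begin{equation*}
\varepsilon_{n}\left( u,\xi\right) =G\left( u,\xi\right) -\frac{1}{u^{2}}\sum_{s=0}^{n-1}\frac{G_{s}\left( \xi\right) }{u^{2s}},
\end{equation*}
substitute into (\ref{eq62}), and use $G_{0}=-\varpi$ together with the recurrence (\ref{eq68}); the sum then telescopes and every term cancels except one, leaving the simple relation $\varepsilon_{n}^{\prime\prime}-u^{2}\varepsilon_{n}=\psi\varepsilon_{n}-u^{-2n}G_{n}$. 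Taking $e^{\pm u\xi}$ as comparison solutions of $y^{\prime\prime}-u^{2}y=0$ and using the Green's function built from $e^{u\xi}$ (recessive at $\alpha_{1}$) and $e^{-u\xi}$ (recessive at $\alpha_{2}$), which on the sub-arc of $\mathcal{L}\left( \xi\right) $ from $\alpha_{1}$ to $\xi$ equals $-\tfrac{1}{2u}e^{-u\left( \xi-t\right) }$ and on the sub-arc from $\xi$ to $\alpha_{2}$ equals $-\tfrac{1}{2u}e^{-u\left( t-\xi\right) }$, one arrives at the integral equation
\begin{equation*}
\varepsilon_{n}\left( u,\xi\right) =\int_{\mathcal{L}\left( \xi\right) }\mathsf{K}\left( \xi,t\right) \left\{ \psi\left( t\right) \varepsilon_{n}\left( t\right) -u^{-2n}G_{n}\left( t\right) \right\} dt.
\end{equation*}
Since $\mathrm{Re}\left( ut\right) $ is monotonic along $\mathcal{L}\left( \xi\right) $, hence (because $\mathrm{Re}\left( u\alpha_{j}\right) =\left( -1\right) ^{j}\infty$) increasing from $\alpha_{1}$ to $\alpha_{2}$, we have $\left\vert \mathsf{K}\left( \xi,t\right) \right\vert \leq 1/\left( 2\left\vert u\right\vert \right) $ for every $t\in\mathcal{L}\left( \xi\right) $, and the end contributions at $\alpha_{1},\alpha_{2}$ drop out.

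The decisive step is then to split off the inhomogeneous part $y_{0}\left( \xi\right) =-u^{-2n}\int_{\mathcal{L}\left( \xi\right) }\mathsf{K}\left( \xi,t\right) G_{n}\left( t\right) dt$ and integrate it by parts once; this is permissible because the end contributions at $\alpha_{1}$ and $\alpha_{2}$ again vanish (here using $\mathrm{Re}\left( u\alpha_{j}\right) =\left( -1\right) ^{j}\infty$), and it produces a main term $u^{-2n-2}G_{n}\left( \xi\right) $ together with two exponentially-weighted integrals of $G_{n}^{\prime}$. From the kernel bound of the previous paragraph these integrals are controlled, giving the pointwise estimate $\left\vert y_{0}\left( \xi\right) \right\vert \leq \left\vert u\right\vert ^{-2n-2}\{\left\vert G_{n}\left( \xi\right) \right\vert +\tfrac{1}{2}\int_{\mathcal{L}\left( \xi\right) }\left\vert G_{n}^{\prime}\left( t\right) dt\right\vert \}$ as well as the uniform one $\sup_{\mathcal{L}\left( \xi\right) }\left\vert y_{0}\right\vert \leq L_{n}\left( \xi\right) \left\vert u\right\vert ^{-2n-2}$, with $L_{n}$ as in (\ref{eq80}). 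I regard this integration by parts as the crux and the main obstacle: without it one is left with only an $\mathcal{O}\left( u^{-2n-1}\right) $ bound, whereas it is exactly what yields the sharp leading term and explains why $\left\vert G_{n}\left( \xi\right) \right\vert $ and $G_{n}^{\prime}$, rather than $G_{n}$ alone under an integral, enter (\ref{eq82}).

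It remains to solve the integral equation by successive approximation. With $\varepsilon_{n}=y_{0}+\sum_{m\geq1}\theta_{m}$ and $\theta_{m+1}\left( \xi\right) =\int_{\mathcal{L}\left( \xi\right) }\mathsf{K}\left( \xi,t\right) \psi\left( t\right) \theta_{m}\left( t\right) dt$, the uniform kernel bound gives $\sup_{\mathcal{L}\left( \xi\right) }\left\vert \theta_{m+1}\right\vert \leq \left( 2\left\vert u\right\vert \right) ^{-1}\left( \int_{\mathcal{L}\left( \xi\right) }\left\vert \psi\left( t\right) dt\right\vert \right) \sup_{\mathcal{L}\left( \xi\right) }\left\vert \theta_{m}\right\vert $, so under the standing hypothesis $\int_{\mathcal{L}\left( \xi\right) }\left\vert \psi\left( t\right) dt\right\vert <2\left\vert u\right\vert $ the series converges geometrically and $\sum_{m\geq1}\sup_{\mathcal{L}\left( \xi\right) }\left\vert \theta_{m}\right\vert \leq \tfrac{1}{2}L_{n}\left( \xi\right) \left\vert u\right\vert ^{-2n-3}\left( 1-\tfrac{1}{2}\left\vert u\right\vert ^{-1}\int_{\mathcal{L}\left( \xi\right) }\left\vert \psi\right\vert \right) ^{-1}\int_{\mathcal{L}\left( \xi\right) }\left\vert \psi\left( t\right) dt\right\vert $. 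Adding this to the pointwise bound on $\left\vert y_{0}\right\vert $ produces (\ref{eq82}), and differentiating the integral equation twice confirms that $u^{-2}\sum_{s=0}^{n-1}u^{-2s}G_{s}+\varepsilon_{n}$ solves (\ref{eq62}). Uniqueness follows because any two particular solutions of the form (\ref{eq67}) differ by a solution of the homogeneous equation (\ref{eq5}) which, being bounded along $\mathcal{L}\left( \xi\right) $ all the way to $\alpha_{1}$ and $\alpha_{2}$, can contain neither an $e^{u\xi}$ nor an $e^{-u\xi}$ component and hence vanishes. Finally, because the particular solution so constructed does not depend on $n$, one has $\varepsilon_{n}=\sum_{s=n}^{n+r-1}u^{-2s-2}G_{s}+\varepsilon_{n+r}$; estimating the finite sum by the triangle inequality and applying (\ref{eq82}) with $n$ replaced by $n+r$ gives (\ref{eq88}) for every positive integer $r$.
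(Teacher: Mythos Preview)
Your proposal is correct and matches the paper's argument in every essential respect: the ODE $\varepsilon_n''-u^2\varepsilon_n=\psi\varepsilon_n-u^{-2n}G_n$, the two-sided variation-of-parameters integral equation along $\mathcal{L}(\xi)$, the integration by parts that pulls out $u^{-2n-2}G_n(\xi)$ and replaces $G_n$ by $G_n'$ under the integral, the kernel bound $1/(2|u|)$, the uniqueness via boundedness at both $\alpha_1$ and $\alpha_2$, and the identity $\varepsilon_n=\sum_{s=n}^{n+r-1}u^{-2s-2}G_s+\varepsilon_{n+r}$ for (\ref{eq88}). The only cosmetic difference is that you solve the integral equation by an explicit Neumann series $\varepsilon_n=y_0+\sum_{m\ge1}\theta_m$ and sum the resulting geometric series, whereas the paper bounds $M_n=\sup_{\mathcal{L}(\xi)}|\varepsilon_n|$ directly from the integral equation and solves the inequality $M_n\le L_n|u|^{-2n-2}+M_n(2|u|)^{-1}\int|\psi\,dt|$; the two computations produce the identical bound.
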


\textbf{Remark}. The bound (\ref{eq88}) is generally sharper than (\ref{eq82}), provided $n+r$ is not too large.

\begin{proof}
Inserting (\ref{eq67}) into (\ref{eq62}), and then using (\ref{eq68}) yields 
\begin{equation}
\frac{d^{2}\varepsilon _{n}}{d\xi ^{2}}-u^{2}\varepsilon _{n}=\psi
\varepsilon _{n}-\frac{G_{n}}{u^{2n}}.  \label{eq69}
\end{equation}%
Then with variation of parameters, we arrive at 
\begin{multline}
\varepsilon _{n}\left( {u,\xi }\right) =\dfrac{e^{-u\xi }}{2u}\int_{\alpha
_{1}}^{\xi }{e^{ut}\left\{ {\dfrac{G_{n}\left( t\right) }{u^{2n}}-\psi \left( t\right) \varepsilon _{n}\left( {u,t}\right) }\right\} dt} \\
+\dfrac{e^{u\xi }}{2u}\int_{\xi }^{\alpha _{2}}{e^{-ut}\left\{ {\dfrac{G_{n}\left( t\right) }{u^{2n}}-\psi \left( t\right) \varepsilon _{n}\left( {u,t}\right) }\right\} dt.}  \label{eq70}
\end{multline}%
The paths of integration in both integrals are chosen to coincide with the
segments of $\mathcal{L}\left( \xi \right) $ from $\alpha _{j}$ to $\xi $.
Next, from integration by parts, we have for the first integral on the RHS
of (\ref{eq70}) 
\begin{equation}
\int_{\alpha _{1}}^{\xi }{e^{ut}G_{n}\left( t\right) dt}=\frac{1}{u}e^{u\xi}G_{n}\left( \xi \right) -\frac{1}{u}\int_{\alpha _{1}}^{\xi }{e^{ut}{G}_{n}^{\prime }\left( t\right) dt,}  \label{eq71}
\end{equation}%
since $\mathrm{Re}\left( u\alpha _{1}\right) =-\infty $. Likewise we have 
\begin{equation}
\int_{\xi }^{\alpha _{2}}{e^{-ut}G_{n}\left( t\right) dt}=\frac{1}{u} e^{-u\xi }G_{n}\left( \xi \right) +\frac{1}{u}\int_{\xi }^{\alpha _{2}}{e^{-ut}{G}_{n}^{\prime }\left( t\right) dt.}  \label{eq72}
\end{equation}%
Consequently we obtain 
\begin{multline}
\varepsilon _{n}\left( {u,\xi }\right) =\dfrac{G_{n}\left( \xi \right) }{u^{2n+2}}-\dfrac{e^{-u\xi }}{2u}\int_{\alpha _{1}}^{\xi }{e^{ut}\left\{ {\dfrac{{G}_{n}^{\prime }\left( t\right) }{u^{2n+1}}+\psi \left( t\right)
\varepsilon _{n}\left( {u,t}\right) }\right\} dt} \\
-\dfrac{e^{u\xi }}{2u}\int_{\xi }^{\alpha _{2}}{e^{-ut}\left\{ {\dfrac{{G}_{n}^{\prime }\left( t\right) }{u^{2n+1}}+\psi \left( t\right) \varepsilon
_{n}\left( {u,t}\right) }\right\} dt,}  \label{eq73}
\end{multline}%
and hence it is evident that 
\begin{equation}
\left\vert {\varepsilon _{n}\left( {u,\xi }\right) }\right\vert \leq
\left\vert \dfrac{{G_{n}\left( \xi \right) }}{u^{2n+2}}\right\vert +\frac{1}{2\left\vert u\right\vert ^{2n+2}}\int_{\mathcal{L}\left( \xi \right) }{\left\vert {{G}_{n}^{\prime }\left( t\right) dt}\right\vert }+\frac{1}{2\left\vert u\right\vert }\int_{\mathcal{L}\left( \xi \right) }{\left\vert {\psi \left( t\right) \varepsilon _{n}\left( {u,t}\right) dt}\right\vert .}
\label{eq75}
\end{equation}%
We now define 
\begin{equation}
M_{n}\left( {u,\xi }\right) ={\sup_{w\in \mathcal{L}\left( \xi \right) }}\left\vert {\varepsilon _{n}\left( {u,w}\right) }\right\vert ,  \label{eq76}
\end{equation}%
which then yields the bound 
\begin{equation}
\int_{\mathcal{L}\left( \xi \right) }{\left\vert {\psi \left( t\right)
\varepsilon _{n}\left( {u,t}\right) dt}\right\vert }\leq M_{n}\left( {u,\xi }\right) \int_{\mathcal{L}\left( \xi \right) }{\left\vert {\psi \left(t\right) dt}\right\vert .}  \label{eq77}
\end{equation}%
If we replace $\xi $ by $w$ in (\ref{eq75}), for any $w\in \mathcal{L}\left(
\xi \right) $, and take the suprema over all such points on this curve, we infer that%
\begin{multline}
{\sup_{w\in \mathcal{L}\left( \xi \right) }}\left\vert {\varepsilon
_{n}\left( {u,w}\right) }\right\vert \leq {\sup_{w\in \mathcal{L}\left(\xi\right) }}\left\vert \dfrac{{G_{n}\left( w \right) }}{u^{2n+2}}\right\vert \\
+\dfrac{1}{2\left\vert u\right\vert ^{2n+2}}\int_{\mathcal{L}\left( \xi
\right) }{\left\vert {{G}_{n}^{\prime }\left( t\right) dt}\right\vert }+ \dfrac{M_{n}\left( {u,\xi }\right) }{2\left\vert u\right\vert }\int_{\mathcal{L}\left( \xi \right) }{\left\vert {\psi \left( t\right) dt}\right\vert .}
\label{eq78}
\end{multline}%
Note that the second and third terms on the RHS are constant along $\mathcal{L}\left( \xi \right) $. Accordingly 
\begin{equation}
M_{n}\left( {u,\xi }\right) \leq \frac{L_{n}\left( \xi \right) }{\left\vert u\right\vert ^{2n+2}}+\frac{M_{n}\left( {u,\xi }\right) }{2\left\vert u\right\vert }\int_{\mathcal{L}\left( \xi \right) }{\left\vert {\psi \left(
t\right) dt}\right\vert ,}  \label{eq79}
\end{equation}%
where $L_{n}\left( \xi \right) $ is defined by (\ref{eq80}). Under the
assumption that $\left\vert u\right\vert $ is sufficiently large so that $\int_{\mathcal{L}\left( \xi \right) }{\left\vert {\psi \left( t\right) dt}\right\vert }<2\left\vert u\right\vert $ then from (\ref{eq79}) we can assert
that 
\begin{equation}
M_{n}\left( {u,\xi }\right) \leq \frac{L_{n}\left( \xi \right) }{\left\vert
u\right\vert ^{2n+2}}\left\{ {1-\frac{1}{2\left\vert u\right\vert }\int_{\mathcal{L}\left( \xi \right) }{\left\vert {\psi \left( t\right) dt}\right\vert }}\right\} ^{-1}.  \label{eq81}
\end{equation}%
From this bound, along with (\ref{eq75}), (\ref{eq77}) and (\ref{eq81},) we conclude that (\ref{eq82}) holds.

This error bound establishes that $G\left( {u,\xi }\right) $ is bounded at both $\xi =\alpha _{1}$ and $\xi =\alpha _{2}$. Consider now any other
particular solution with this property. We know it can be expressed in the form (\ref{eq66}). However, if we let $\xi \rightarrow \alpha _{1}$ ($\mathrm{Re}\left( u\xi \right) \rightarrow -\infty $) in this expression we
find that $B\left( u\right) $ must be identically zero, otherwise the solution would be unbounded in this limit. Likewise, $A\left( u\right) $
must be identically zero, otherwise the solution would be unbounded at $\xi=\alpha _{2}$. This verifies that $G\left( {u,\xi }\right) $ is the unique
solution satisfying (\ref{eq67}) - (\ref{eq80}).

Finally, for any positive integer $r$, we immediately deduce from (\ref{eq67}) and uniqueness of the solution that 
\begin{equation}
\varepsilon _{n}\left( {u,\xi }\right) =\sum\limits_{s=n}^{n+r-1}\frac{G_{s}\left( \xi \right) }{u^{2s+2}}+\varepsilon _{n+r}\left( {u,\xi }\right)
.  \label{eq87}
\end{equation}%
Hence from this, and with $n$ replaced by $n+r_{\,}$in (\ref{eq82}), we arrive at the alternative bound (\ref{eq88}).
\end{proof}

\section{Modified Bessel functions}

\label{sec5}

We now apply the expansions of theorem \ref{thm2} to modified Bessel functions $I_{\nu }\left( {z}\right) $ and $K_{\nu }\left( {z}\right) $. To
this end we observe from \cite[chap. 10, eq. (7.02)]{Olver:1997} that $z^{1/2}I_{\nu }\left( {\nu z}\right) $ and $z^{1/2}K_{\nu }\left( {\nu z}\right) $ satisfies the differential equation (\ref{eq1}) with $u=\nu $ (which we assume to be real and positive), and 
\begin{equation}
f\left( z\right) =\frac{1+z^{2}}{z^{2}},\ g\left( z\right) =-\frac{1}{4z^{2}}.
\label{eq90}
\end{equation}%
We note that $f\left( z\right) $ and $g\left(z\right) $ meet the requirements of (\ref{eq19a}) and (\ref{eq19b}), thus ensuring that the
expansions we shall derive will be uniformly valid at both singularities $z=0$ and $z=\infty $.

From (\ref{eq3}) we obtain the new independent variable explicitly as 
\begin{equation}
\xi =\int {f^{1/2}\left( z\right) dz}=\left( {1+z^{2}}\right) ^{1/2}+\ln
\left\{ {\frac{z}{1+\left( {1+z^{2}}\right) ^{1/2}}}\right\} .  \label{eq91}
\end{equation}%
A full description of the $z-\xi $ map in the complex plane is given in \cite[chap. 10, section 7]{Olver:1997}, and so it is not necessary for us to give details, except to note that $z=0$ corresponds to $\xi =-\infty $, and $z=+\infty $ corresponds to $\xi =+\infty $.

From (\ref{eq91}), along with the new dependent variable given by (\ref{eq4}), we obtain the transformed equation (\ref{eq5}) where 
\begin{equation}
\psi \left( \xi \right) =\frac{z^{2}\left( {4-z^{2}}\right) }{4\left( {1+z^{2}}\right) ^{3}}.  \label{eq92}
\end{equation}%
It is convenient to work with the variable 
\begin{equation}
p=\left( {1+z^{2}}\right) ^{-1/2}.  \label{eq93}
\end{equation}%
Then, from (\ref{eq2}), (\ref{eq11}), (\ref{eq13}) and (\ref{eq93}), and
using the notation $\tilde{{F}}_{s}\left( p\right) =\hat{F}_{s}\left(z \left(p\right) \right) $, we obtain the coefficients for this case as being given by 
\begin{equation}
\tilde{{F}}_{1}\left( p\right) ={\tfrac{1}{8}}p^{2}\left( {1-p^{2}}\right)
\left( {5p^{2}-1}\right) ,  \label{eq94}
\end{equation}
\begin{equation}
\tilde{{F}}_{2}\left( p\right) ={\tfrac{1}{8}}\,p^{3}\left( {1-p^{2}}\right)
\left( {12\,p^{2}-15\,p^{4}-1}\right),  \label{eq95}
\end{equation}%
and 
\begin{equation}
\tilde{{F}}_{s+1}\left( p\right) =\frac{1}{2}p^{2}\left( {1-p^{2}}\right) 
\frac{d\tilde{{F}}_{s}\left( p\right) }{dp}-\frac{1}{2}\sum\limits_{j=1}^{s-1}{\tilde{{F}}_{j}\left( p\right) \tilde{{F}}_{s-j}\left(
p\right) }\ \left( {s=2,3,4,\cdots }\right) .  \label{eq96}
\end{equation}%
Therefore from (\ref{eq2}), (\ref{eq10}), and (\ref{eq93}) we have 
\begin{equation}
\tilde{{E}}_{s}\left( p\right) =-\int_{0}^{p}{\frac{\tilde{{F}}_{s}\left(q\right) }{q^{2}\left( {1-q^{2}}\right) }dq}\ \left( {s=1,2,3,\cdots }\right),  \label{eq97}
\end{equation}
where $\tilde{{E}}_{s}\left( p\right) =\hat{E}_{s}\left(z \left(p\right) \right) $.
The lower limits of integration are chosen here for convenience only.

Now from (\ref{eq94}) - (\ref{eq96}) we see by induction that $p^{2}\left( {1-p^{2}}\right) $ is a factor for each $\tilde{{F}}_{s}\left( p\right) $, and hence from (\ref{eq97}) each $\tilde{{E}}_{s}\left( p\right) $ is also a
polynomial in $p$. The first two are given by 
\begin{equation}
\tilde{{E}}_{1}\left( p\right) ={\tfrac{1}{{24}}}p\,\left( {3-5\,p^{2}}\right) ,\ \tilde{{E}}_{2}\left( p\right) ={\tfrac{1}{{16}}}\,p^{2}\left( {1-p^{2}}\right) \left( {1-5\,p^{2}}\right) .  \label{eq98}
\end{equation}

Noting that $p=1$ corresponds to $z=0$ we next define 
\begin{equation}
\tilde{{E}}_{s}\left( 1\right) =k_{s}\ \left( {s=1,2,3,\cdots }\right) .
\label{eq99}
\end{equation}%
It is straightforward to verify that $k_{2m}=0\ \left( {m=1,2,3,\cdots }\right) $, and the first three non-zero (odd terms) being given by 
\begin{equation}
k_{1}=-{\tfrac{1}{{12}}},\ k_{3}={\tfrac{1}{{360}}},\ k_{5}=-{\tfrac{1}{{1260}}.}  \label{eq100}
\end{equation}

We can now match solutions which are recessive at the singularities $z=0$
and $z=+\infty $, and to this end we apply theorem \ref{thm2} with $\alpha
_{1}=-\infty $ and $\alpha _{2}=\infty $. For the solutions that are recessive
at $z=0$ ($\xi =-\infty $) we have from (\ref{eq4}) and (\ref{eq90}) 
\begin{equation}
I_{\nu }\left( {\nu z}\right) =\frac{c_{1}\left( \nu \right) }{\left( {1+z^{2}}\right) ^{1/4}}\exp \left\{ {\sum\limits_{s=1}^{n-1}{\frac{\tilde{{E}}_{s}\left( p\right) -k_{s}}{\nu ^{s}}}}\right\} \left\{ {e^{\nu \xi}+\varepsilon _{n,1}\left( {\nu ,\xi }\right) }\right\} ,  \label{eq101}
\end{equation}%
for some constant $c_{1}\left( \nu \right) $. From (\ref{eq91}) we find as $z\rightarrow 0$ 
\begin{equation}
\xi =\ln \left( {{\tfrac{1}{2}}z}\right) +1+\mathcal{O}\left( {z^{2}}\right)
,  \label{eq102}
\end{equation}%
and we know that in the same circumstances (\cite[chap. 12, eq. (1.01)]{Olver:1997})%
\begin{equation}
I_{\nu }\left( {\nu z}\right) =\frac{\left( {{\frac{1}{2}}\nu z}\right)
^{\nu }}{\Gamma \left( {\nu +1}\right) }\left\{ {1+\mathcal{O}\left(z\right) }\right\} .  \label{eq103}
\end{equation}%
On using (\ref{eq93}), (\ref{eq99}), (\ref{eq101}) and (\ref{eq102}) we determine that 
\begin{equation}
c_{1}\left( \nu \right) =\frac{\nu ^{\nu }}{e^{\nu }\Gamma \left( {\nu +1}\right) }.  \label{eq104}
\end{equation}

The matching of solutions that are recessive at $z=+\infty $ ($\xi =+\infty $) is similarly established. We have the relation 
\begin{equation}
K_{\nu }\left( {\nu z}\right) =\frac{c_{2}\left( \nu \right) }{\left( {1+z^{2}}\right) ^{1/4}}\exp \left\{ {\sum\limits_{s=1}^{n-1}{\left( {-1}\right) ^{s}\frac{\tilde{{E}}_{s}\left( p\right) }{\nu ^{s}}}}\right\}\left\{ {e^{-\nu \xi }+\varepsilon _{n,2}\left( {\nu ,\xi }\right) }\right\}
,  \label{eq105}
\end{equation}%
and on using 
\begin{equation}
K_{\nu }\left( {\nu z}\right) =\left( {\frac{\pi }{2\nu z}}\right)^{1/2}e^{-\nu z}\left\{ {1+\mathcal{O}\left( {\frac{1}{z}}\right) }\right\} ,
\label{eq106}
\end{equation}%
as $z\rightarrow +\infty $, along with $\xi =z+\mathcal{O}\left( {z^{-1}}\right) $ which is verifiable from (\ref{eq91}), we obtain the desired
formula 
\begin{equation}
c_{2}\left( \nu \right) =\left( {\frac{\pi }{2\nu }}\right) ^{1/2}.
\label{eq107}
\end{equation}%
In this derivation we used the fact that $p=0$ corresponds to $z=+\infty $, and from (\ref{eq97})$\tilde{\text{ }{E}}_{s}\left( 0\right) =0\ \left( {s=1,2,3,\cdots }\right) $.

Let us now examine the error bounds in more detail. To this end, combining (\ref{eq101}) and (\ref{eq104}) yields
\begin{equation}
I_{\nu }\left( {\nu z}\right) =\frac{\nu ^{\nu }}{e^{\nu }\Gamma \left( {\nu+1}\right) \left( {1+z^{2}}\right) ^{1/4}}\exp \left\{ {\nu \xi
+\sum\limits_{s=1}^{n-1}{\frac{\tilde{{E}}_{s}\left( p\right) -k_{s}}{\nu
^{s}}}}\right\} \left\{ {1+\eta _{n,1}\left( {\nu ,z}\right) }\right\} ,
\label{eq108}
\end{equation}%
where 
\begin{equation}
\eta _{n,1}\left( {\nu ,z}\right) =e^{-\nu \xi }\varepsilon _{n,1}\left( {\nu ,\xi }\right) .  \label{eq109}
\end{equation}%
Thus from (\ref{eq23}), (\ref{eq16}) - (\ref{eq19}) and theorem \ref{thm2}
we have the error bound 
\begin{multline}
\left\vert {\eta _{n,1}\left( {\nu ,z}\right) }\right\vert \leq \left\vert {\exp \left\{ {\sum\limits_{s=n}^{n+r-1}{\dfrac{\tilde{{E}}_{s}\left(
p\right) -k_{s}}{\nu ^{s}}}}\right\} -1}\right\vert \\
+\dfrac{\omega _{n+r,1}\left( {\nu ,p}\right) }{\nu ^{n+r}}\exp \left\{ {\dfrac{\varpi _{n+r,1}\left( {\nu ,p}\right) }{\nu }+\sum\limits_{s=n}^{n+r-1}{\dfrac{\mathrm{Re}\tilde{{E}}_{s}\left( p\right) -k_{s}}{\nu ^{s}}}+\dfrac{\omega _{n+r,1}\left( {\nu ,p}\right) }{\nu ^{n+r}}}\right\} ,  \label{eq113}
\end{multline}%
where 
\begin{equation}
\omega _{n,1}\left( {\nu ,p}\right) =2\int_{1}^{p}{\left\vert {\frac{\tilde{{F}}_{n}\left( q\right) dq}{q^{2}\left( {1-q^{2}}\right) }}\right\vert }+\sum\limits_{s=1}^{n-1}{\frac{1}{\nu ^{s}}\int_{1}^{p}{\left\vert {\frac{\tilde{{G}}_{n,s}\left( q\right) dq}{q^{2}\left( {1-q^{2}}\right) }}\right\vert ,}}  \label{eq114}
\end{equation}%
in which 
\begin{equation}
\tilde{{G}}_{n,s}\left( p\right) =\sum\limits_{k=s}^{n-1}{\tilde{{F}}_{k}\left( p\right) \tilde{{F}}_{s+n-k-1}\left( p\right) ,}  \label{eq115}
\end{equation}%
and 
\begin{equation}
\varpi _{n,1}\left( {\nu ,p}\right) =\sum\limits_{s=0}^{n-2}{\frac{4}{\nu
^{s}}\int_{1}^{p}{\left\vert {\frac{\tilde{{F}}_{s+1}\left( q\right) dq}{q^{2}\left( {1-q^{2}}\right) }}\right\vert .}}  \label{eq116}
\end{equation}

The bound (\ref{eq113}) holds uniformly in an unbounded $z$-domain that includes the half plane $\left\vert \arg \left( z\right) \right\vert \leq 
\frac{1}{2}\pi $, excluding points on and near the imaginary axis from $z=i$ to $z=i\infty $, and\ from $z=-i$ to $z=-i\infty $.

Similarly, one can show that 
\begin{equation}
K_{\nu }\left( {\nu z}\right) =\left( {\dfrac{\pi }{2\nu }}\right) ^{1/2}\dfrac{1}{\left( {1+z^{2}}\right) ^{1/4}}\exp \left\{ -{\nu \xi
+\sum\limits_{s=1}^{n-1}}\left( -1\right) ^{s}{{\dfrac{\tilde{{E}}_{s}\left(p\right) }{\nu ^{s}}}}\right\} \left\{ {1+\eta _{n,2}\left( {\nu ,z}\right) }\right\} ,  \label{eq116a}
\end{equation}%
where%
\begin{multline}
\left\vert {\eta _{n,2}\left( {\nu ,z}\right) }\right\vert \leq \left\vert {\exp \left\{ {\sum\limits_{s=n}^{n+r-1}\left( -1\right) ^{s}{\dfrac{\tilde{{E}}_{s}\left( p\right) }{\nu ^{s}}}}\right\} -1}\right\vert  \\
+\dfrac{\omega _{n+r,2}\left( {\nu ,p}\right) }{\nu ^{n+r}}\exp \left\{ {\dfrac{\varpi _{n+r,2}\left( {\nu ,p}\right) }{\nu }+\sum\limits_{s=n}^{n+r-1}\left( -1\right) ^{s}{\dfrac{\mathrm{Re}\tilde{{E}}_{s}\left( p\right) }{\nu ^{s}}}+\dfrac{\omega _{n+r,2}\left( {\nu ,p}\right) }{\nu ^{n+r}}}\right\} .  \label{eq116b}
\end{multline}%
Here $\omega _{n,2}\left( {\nu ,p}\right) $ and $\varpi _{n+r,2}\left( {\nu,p}\right) $ are given by (\ref{eq114}) and (\ref{eq116}) respectively,
except the integrals are taken along progressive paths from $q=0$ to $q=p$.
The bound (\ref{eq116b}) holds uniformly in an unbounded $z$-domain that
includes the half plane $\left\vert \arg \left( z\right) \right\vert \leq 
\frac{1}{2}\pi $, excluding a neighbourhood of the points $z=\pm i$.

\begin{table}[H]
\begin{equation*}
\begin{array}{|c|c|c|}
\hline
\boldmath z\  & \boldmath \left\vert {\eta _{5,1}\left( {20 ,z}\right) }\right\vert & \text{Error bound} \\ \hline
0.01 & \num{7.418601e-12} & \num{7.418606e-12} \\ \hline
0.1 & \num{5.422462e-10} & \num{5.422471e-10} \\ \hline
1 & \num{6.1812e-9} & \num{6.1822e-9} \\ \hline
10 & \num{2.470e-10} & \num{2.493e-10} \\ \hline
100 & \num{2.476e-10} & \num{2.488e-10} \\ \hline
\end{array}
\end{equation*}
\caption{\textit{Exact relative error, and bounds from (\protect\ref{eq113})
with $\protect\nu=20$ and $n=r=5$}}
\label{table1}
\end{table}

Table \ref{table1} compares the bound (\ref{eq113}) with $\nu=20$ and $n=r=5$, to the absolute value of exact relative error $\left\vert {\eta_{5,1}\left( {20 ,z}\right) }\right\vert$ for various values of $z$. We see that the bounds are sharp for all values of $z$, and that the expansion (\ref{eq108}) provides a good approximation uniformly for all $z$. Note that for our values $\nu=20$ and $n=5$ we have $\nu^{-n}=\num{3.125e-7}$.

For $r=0$ we have the unmodified bound that comes from theorem \ref{thm1}, namely%
\begin{equation}
\left\vert {\eta _{n,1}\left( {\nu ,z}\right) }\right\vert \leq \dfrac{\omega _{n,1}\left( {\nu ,p}\right) }{\nu ^{n}}\exp \left\{ {\dfrac{\varpi
_{n,1}\left( {\nu ,p}\right) }{\nu }+\dfrac{\omega _{n,1}\left( {\nu ,p}\right) }{\nu ^{n}}}\right\} .  \label{eq120}
\end{equation}%
Table \ref{table2} compares this bound to the exact error for the same values of $z$, ${\nu }$, and $n$ as table \ref{table1}. We see that the bound is fairly sharp close to $z=0$ ($p=1$), but overestimates the true error by an order of magnitude for other values of $z$. In essence the unmodified bound is of the expected $\nu$ order of magnitude, i.e. it is $\mathcal{O}\left(\nu^{-n}\right)$ uniformly for $0\leq z< \infty$, but fails to capture
how small the true error is.

\begin{table}[H]
\begin{equation*}
\begin{array}{|c|c|c|}
\hline
\boldmath z\  & \boldmath \left\vert {\eta _{5,1}\left( {20 ,z}\right) }\right\vert & \text{Error bound} \\ \hline
0.01 & \num{7.41e-12} & \num{1.56e-11} \\ \hline
0.1 & \num{5.42e-10} & \num{1.12e-9} \\ \hline
1 & \num{6.18e-9} & \num{4.15e-8} \\ \hline
10 & \num{2.47e-10} & \num{5.75e-8} \\ \hline
100 & \num{2.48e-10} & \num{5.76e-8} \\ \hline
\end{array}
\end{equation*}
\caption{\textit{Exact relative error, and bounds from (\protect\ref{eq120})
with $\protect\nu=20$ and $n=5$}}
\label{table2}
\end{table}

\begin{figure}[!htb]
\vspace*{0.8cm} \centerline{\includegraphics[height=6cm,width=8cm]{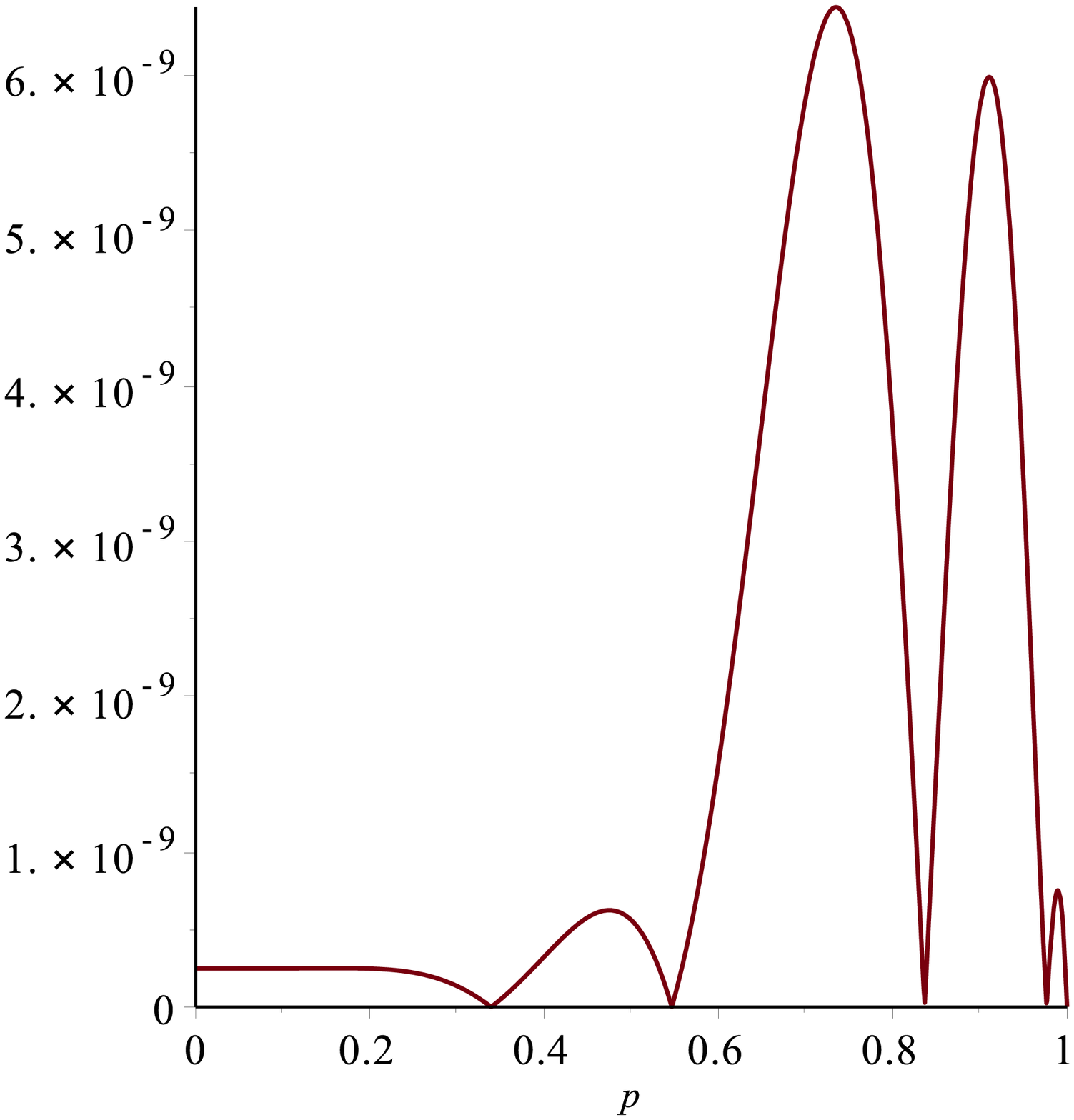}}
\caption{\textit{Plot of $\Phi _{5}\left( 20,p\right) $}}
\label{fig1}
\end{figure}

To see why (\ref{eq120}) is not so sharp (and indeed this is true of most large parameter error estimates coming from successive approximations), consider first the absolute value of the exact relative error $\left\vert {\eta _{n,1}\left( {\nu ,z}\right) }\right\vert $: we expect this to be close
to the first neglected term of the expansion (\ref{eq108}). Now for large real $\nu $ this term is approximately $\Phi _{n}\left( \nu ,p\right) $,
where 
\begin{equation}
\Phi _{n}\left( \nu ,p\right) := \dfrac{\left\vert\tilde{{E}}_{n}\left(
p\right) -k_{n}\right\vert}{\nu ^{n}}.  \label{eq121}
\end{equation}%
Figure \ref{fig1} depicts the graph of $\Phi _{5}\left( 20,p\right) $ for $0\leq p\leq 1$. We observe that it is relatively small for $0\leq
p\lessapprox 0.6$ as well as for $p$ close to $1$, and of course near its zeros.

On the other hand, for large $\nu $ the error bound (\ref{eq120}) is approximated by the term outside the exponential, and further $\omega_{n,1}\left( {\nu ,p}\right)$ is approximated by the leading term in (\ref{eq114}). Thus the error bound (\ref{eq120}) is approximated by the function 
\begin{equation}
\Omega _{n}\left( \nu ,p\right) :=\frac{2}{\nu ^{n}}\int_{p}^{1}\frac{{\left\vert \tilde{{F}}_{n}\left( q\right) \right\vert }dq}{q^{2}\left( {1-q^{2}}\right) }.  \label{eq122}
\end{equation}

\begin{figure}[!htb]
\vspace*{0.8cm} \centerline{\includegraphics[height=6cm,width=8cm]{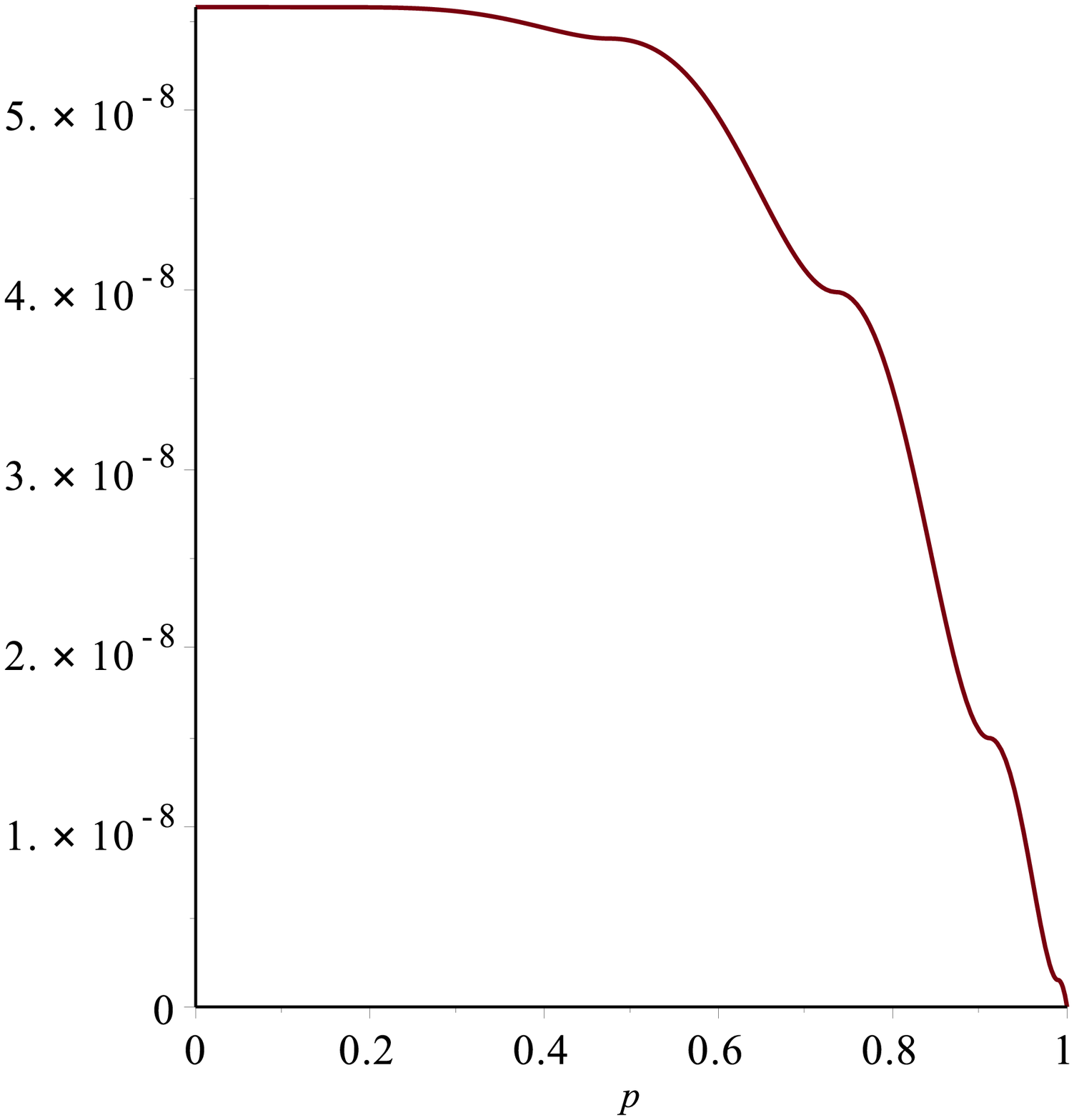}}
\caption{\textit{Plot of $\Omega _{5}\left( 20,p\right)$}}
\label{fig2}
\end{figure}

Figure \ref{fig2} depicts the graph of $\Omega _{5}\left( 20,p\right)$ for $0\leq p\leq 1$, which is obviously monotonically increasing as $p$ decreases
from $1$ to $0$. On comparison to figure \ref{fig1} it is clear that it overestimates the true error if $p$ is not close to $1$, and this discrepancy is exacerbated near the zeros of $\Phi _{n}\left( \nu ,p\right) $ (where the exact error can be expected to be ${\mathcal{O}\left( \nu^{-n-1}\right) }$).

This is highlighted in figure \ref{fig3}, where the ratio $\Phi _{5}\left(
20,p\right) /\Omega _{5}\left( 20,p\right) $ is plotted for $0\leq p\leq 1$,
and is shown to be small for $0\leq p\lessapprox 0.6$, as well as in the
neighbourhood of the zeros of $\Phi _{n}\left( \nu ,p\right) $.

\begin{figure}[ptb]
\vspace*{0.8cm} \centerline{\includegraphics[height=6cm,width=8cm]{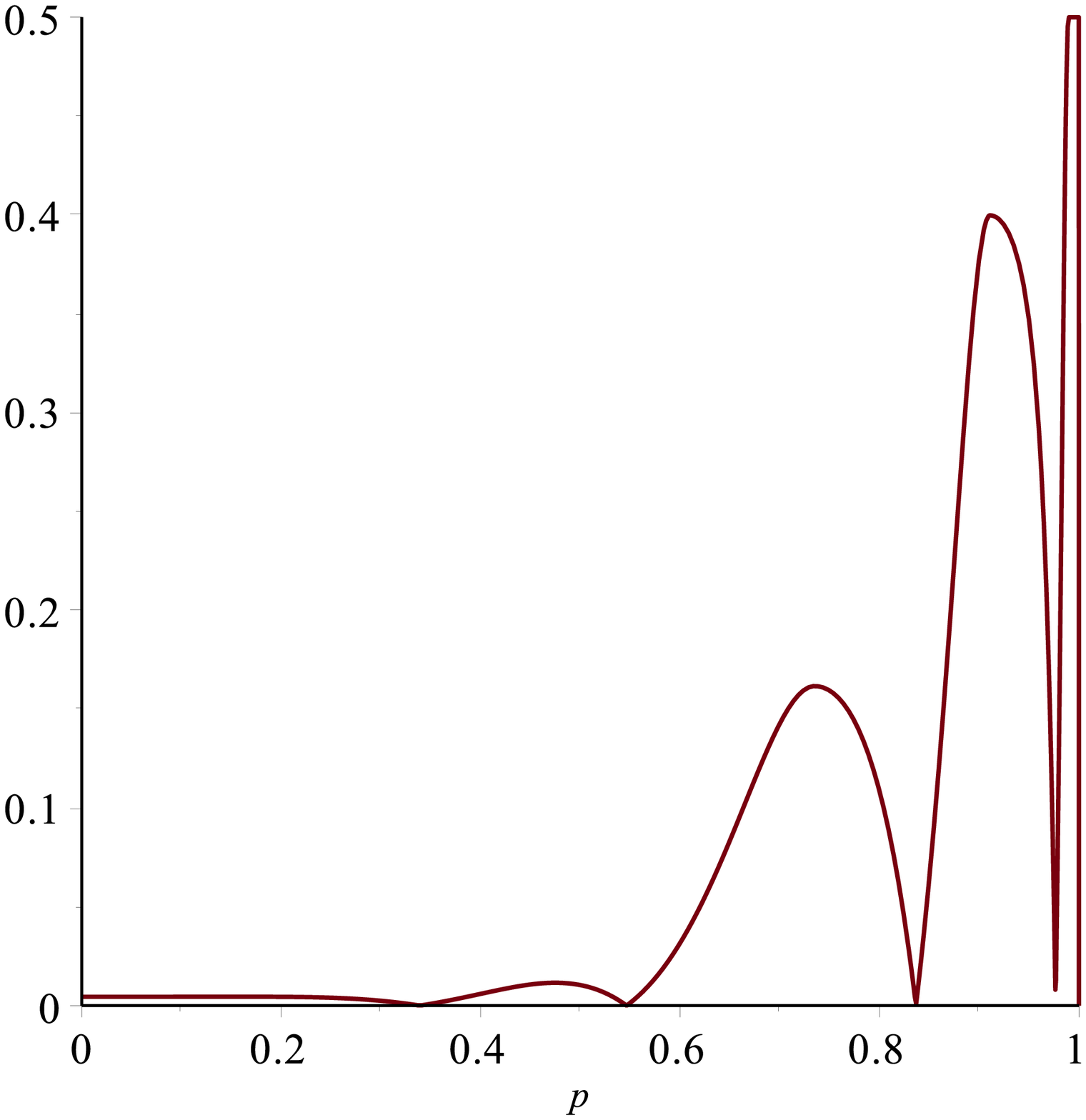}}
\caption{\textit{Plot of $\Phi _{5}\left( 20,p\right) /\Omega _{5}\left(
20,p\right)$ }}
\label{fig3}
\end{figure}

We mention that the choice of $r$ in our modified bounds (\ref{eq113}) and (\ref{eq116b}) (and more generally in theorem \ref{thm2}) is not completely arbitrary, due to the divergent nature of the asymptotic expansions. Specifically, $n+r$ should not be larger than the number of terms ($n_{0}$ say) that give maximal accuracy. In general it is usual for $n_{0}=\mathcal{O}\left( u\right) $, and if we choose to take this number of terms in an expansion, then we must resort to using the unmodified error bounds of theorem \ref{thm1}.

We conclude by mentioning that for large $z$ a further improvement in accuracy in the asymptotic approximation for $I_{\nu }\left( \nu z\right) $ may be possible via a connection formula, and likewise for $K_{\nu }\left( \nu z\right) $ near $z=0$. To illustrate this for the former modified Bessel function, we use the
relation (\cite[chap. 7, ex. 8.2]{Olver:1997})  
\begin{equation}
\pi iI_{\nu }\left( \nu z\right) =K_{\nu }\left( \nu ze^{-\pi i}\right)
-e^{\nu \pi i}K_{\nu }\left( \nu z\right) ,  \label{eq117}
\end{equation}%
along with asymptotic expansions for $K_{\nu }\left( \nu z\right) $ (given
by (\ref{eq116a})), and a similar expansion for $K_{\nu }\left( \nu ze^{-\pi
i}\right) ,$ namely 
\begin{equation}
K_{\nu }\left( {\nu ze}^{-\pi i}\right) =i\left( {\frac{\pi }{2\nu }}\right)
^{1/2}\frac{1}{\left( {1+z^{2}}\right) ^{1/4}}\exp \left\{ {\nu \xi +
\sum\limits_{s=1}^{n-1}{\frac{\tilde{{E}}_{s}\left( p\right) }{\nu ^{s}}}}\right\} \left\{ {1+\eta_{n,3}\left( {\nu ,\xi }\right) } \right\} .  \label{eq119}
\end{equation}

We omit details on the derivation of this expansion, except to remark that it is based on the recessive behaviour of the function at $z=\infty e^{\pi i},$ and is uniformly valid in an unbounded domain which includes the half-plane $\mathrm{Re}z>z_{0}$ ($\left\vert \arg z\right\vert <\frac{1}{2}\pi $), where $z_{0}=0.66274\cdots $ is the point on the positive real axis labeled by $\mathsf{B}$ in \cite[chap. 10, fig. 7.1]{Olver:1997}. In the expansion (\ref{eq119}) the error term ${\eta_{n,3}\left( {\nu ,\xi }\right) }$ has the same bound (\ref{eq116b}) as ${\eta_{n,2}\left( {\nu ,\xi }\right) }$,
except the path of integration must meet different monotonicity requirements; one acceptable path is parameterised by $q=\left( {1+}\left(
z+i\tau \right) {^{2}}\right) ^{-1/2}$ ($0\leq \tau <\infty $). Importantly, ${\eta_{n,3}\left( {\nu ,\xi }\right) }$ vanishes as $\mathrm{Re}z\rightarrow +\infty $, as does ${\eta_{n,2}\left( {\nu ,\xi }\right) }$. Thus on inserting (\ref{eq116a}) and (\ref{eq119}) into (\ref{eq117}) we obtain a compound asymptotic expansion for $I_{\nu }\left( \nu z\right) $ whose accuracy improves as $\mathrm{Re}z\rightarrow +\infty $, in contrast to (\ref{eq108}).

\end{document}